\def\IE{{\mathbb E}}
\def\IP{{\mathbb P}}
\def\IR{{\mathbb R}}
\def\IQ{{\mathbb Q}}
\def\IZ{{\mathbb Z}}
\def\n{\noindent}
\def\dsl{\textstyle\sum\limits}
\def\dis{\displaystyle}
\def\o{\omega}
\def\fr{\mbox{\footnotesize $\dis\frac{1}{2}$}}
\def\ve{\varepsilon}
\def\f{\footnotesize}
\def\r{\rightarrow}
\def\point{{\mbox{\large $.$}}}
\def\wt{\widetilde}
\def\cA{{\cal A}}
\def\cB{{\cal B}}
\def\cC{{\cal C}}
\def\cL{{\cal L}}
\def\cI{{\cal I}}
\def\cJ{{\cal J}}
\def\cK{{\cal K}}
\def\cH{{\cal H}}
\def\cF{{\cal F}}
\def\cG{{\cal G}}
\def\cU{{\cal U}}
\def\cV{{\cal V}}
\def\cZ{{\cal Z}}
\def\wtv{\widetilde{\varphi}}
\def\wte{\widetilde{E}}
\def\wtie{\widetilde{\IE}}
\newtheorem{theorem}{Theorem}[section]
\newtheorem{lemma}[theorem]{Lemma}
\newtheorem{corollary}[theorem]{Corollary}
\newtheorem{proposition}[theorem]{Proposition}
\newtheorem{remark}[theorem]{Remark}
\begin{document}

\noindent
~

\bigskip
\begin{center}
{\bf COUPLING AND AN APPLICATION TO LEVEL-SET \\
PERCOLATION OF THE GAUSSIAN FREE FIELD}
\end{center}

\begin{center}
Alain-Sol Sznitman
\end{center}


\bigskip\bigskip
\begin{abstract}
In the present article we consider a general enough set-up and obtain a refinement of the coupling between the Gaussian free field and random interlacements recently constructed by Titus Lupu in \cite{Lupu}. We apply our results to level-set percolation of the Gaussian free field on a $(d+1)$-regular tree, when $d \ge 2$, and derive bounds on the critical value $h_*$. In particular, we show that $0 < h_* < \sqrt{2u_*}$, where $u_*$ denotes the critical level for the percolation of the vacant set of random interlacements on a $(d+1)$-regular tree.
\end{abstract}

\vfill 

\n
Departement Mathematik 
\\
ETH Z\"urich\\
CH-8092 Z\"urich\\
Switzerland

\vfill

~
\newpage
\thispagestyle{empty}
~

\newpage
\setcounter{page}{1}

 \setcounter{section}{-1}
 \section{Introduction}
  \setcounter{equation}{0}
Cable processes constitute a potent tool in conjunction with Dynkin-type isomorphism theorems as shown in the recent articles \cite{Lupu}, \cite{Lupu15}, \cite{QianWern}, \cite{Zhai}. In the present work, in a general enough set-up, we obtain a refinement of the coupling between the Gaussian free field and random interlacements recently constructed in \cite{Lupu}. We apply our results to level-set percolation of the Gaussian free field on the $(d+1)$-regular tree $(d \ge 2)$ endowed with unit weights. We characterize the critical value $h_*$ for level-set percolation in terms of a certain variational problem and establish upper and lower bounds on $h_*$. In particular, we show that for all $d \ge 2$, $0 < h_* < \sqrt{2 u_*}$, where $u_*$ stands for the critical value for the percolation of the vacant set of random interlacements on the $(d+1)$-regular tree, an explicit quantity by the results of \cite{Teix09b}.
  
\medskip
We now describe our results in more detail. We consider a locally finite, connected, transient weighted graph, with vertex set $E$, and symmetric weights $c_{x,y} = c_{y, x} \ge 0$, which are positive exactly when $x \sim y$, that is, when $x$ and $y$ are neighbors. We consider the discrete time simple random walk on this weighted graph. When in $x \in E$, the walk jumps to a neighbor $y$ of $x$ with probability $c_{x,y} /\lambda_x$, where
\begin{equation}\label{0.1}
\lambda_x = \dsl_{x ' \sim x} c_{x,x'}, \; \mbox{for $x \in E$}.
\end{equation}

\n
We write $P_x$ for the law of the walk starting at $x, E_x$ for the corresponding expectation, and $(Z_k)_{k\ge 0}$, for the walk. The Green function is symmetric and equals
\begin{equation}\label{0.2}
g(x,y) = \mbox{\f $\dis\frac{1}{\lambda_y}$} \; E_x \Big[\dsl^\infty_{k=0} \,1\{Z_k = y\}\Big], \; \mbox{for $x,y \in E$}.
\end{equation}

\n
We denote by $\IP^G$ the canonical law on $\IR^E$ of the Gaussian free field on $E$, and by $(\varphi_x)_{x \in E}$ the canonical field, so that under $\IP^G$
\begin{equation}\label{0.3}
\mbox{$(\varphi_x)_{x \in E}$ is a centered Gaussian field with covariance $g(\cdot,\cdot)$}.
\end{equation}

\n
We also consider $u > 0$, and on some auxiliary probability spaces governed by the probability $\IP^I$ (see for instance \cite{Szni12b}), the non-negative field
\begin{equation}\label{0.4}
\mbox{$(\ell_{x,u})_{x \in E}$ of occupation-times of random interlacements at level $u$ on $E$,}
\end{equation}
as well as
\begin{equation}\label{0.5}
\cI^u = \{x \in E;\, \ell_{x,u} > 0\} \;\mbox{and} \; \cV^u = \{x \in E; \ell_{x,u} = 0\},
\end{equation}

\n
the respective interlacement set and vacant set at level $u$. A Dynkin-type isomorphism theorem is known to hold in this context (see Theorem 0.1 of \cite{Szni12b}, see also 
\cite{EiseKaspMarcRoseShi00}, \cite{MarcRose06}, \cite{SaboTarr})
\begin{equation}\label{0.6}
\begin{array}{l}
\big(\fr \;\varphi^2_x + \ell_{x,u}\big)_{x \in E} \;\mbox{under the product measure $\IP^G \otimes \IP^I$ has same law as}
\\[2ex]
\big(\fr \;(\varphi_x - \sqrt{2u})^2\big)_{x \in E} \;\mbox{under $\IP^G$}.
\end{array}
\end{equation}

\n
One can attach a cable system to the above weighted graph, see Section 1. It has vertex set $\wt{E} \supseteq E$ such that all edges $\{x,y\}$ in $E$ are linked by a compact segment of length $(2c_{x,y})^{-1}$. One defines on the cable system a continuous diffusion behaving as standard Brownian motion in the interior of each such segment. It has a continuous symmetric Green function $\wt{g}(z,z')$, $z,z' \in \wt{E}$, with respect to the Lebesgue measure on $\wt{E}$, which extends the Green function $g(\cdot,\cdot)$ in (\ref{0.2}) of the discrete time walk. One can then consider the Gaussian free field on $\wt{E}$, which is a continuous centered Gaussian field $(\wtv_z)_{z \in \wt{E}}$ with covariance $\wt{g}(\cdot,\cdot)$. Under the assumption that the sign clusters of the Gaussian free field on $\wt{E}$ are a.s. bounded, it was shown in Theorem 3 of \cite{Lupu} that one can construct a coupling of $(\eta_x)_{x \in E}$, a Gaussian free field on $E$, and $\cV^u$ in (\ref{0.5}) such that
\begin{equation}\label{0.7}
\cV^u \supseteq \{\eta > \sqrt{2u}\} \big(\stackrel{\rm def}{=}  \{x \in E; \,\eta_x > \sqrt{2u}\}\big), \;\mbox{a.s.}\,.
\end{equation}
The assumption on the sign clusters of the Gaussian free field on $\wt{E}$ holds for instance in the case of $\IZ^d$, $d \ge 3$, endowed with unit weights, as shown in \cite{Lupu} (we will see in Section 4 that it holds as well for the $(d+1)$-regular tree, $d \ge 2$, endowed with unit weights).

\medskip
In the present work we refine the construction of \cite{Lupu}, and under the (mild) additional assumption that $\sup_{x \in E} g(x,x) < \infty$ (see also (\ref{1.40a}) or (\ref{1.40b}) for a weakening of this assumption), we construct a coupling of $(\eta_x)_{x \in E}$, $(\varphi_x)_{x \in E}$ Gaussian free fields on $E$, and $\cV^u$ as in (\ref{0.5}), such that
\begin{equation}\label{0.8}
\begin{array}{rl}
{\rm i)} & \mbox{$\varphi$ and $\cV^u$ are independent,}
\\[1ex]
{\rm ii)} & \mbox{a.s., for any $A \subseteq (0,\infty), \cV^u \cap \{ \varphi \in A\}\ \supseteq \{ \eta \in \sqrt{2u} + A\}$}
\end{array}
\end{equation}

\n
(thus, choosing $A = (0,\infty)$, ii) above clearly refines (\ref{0.7})).

\medskip
We prove (\ref{0.8}) in Corollary \ref{cor2.4} (see also Remark \ref{rem2.6}). It is a direct consequence of a coupling between the Gaussian free field on the cable system and the random interlacements at level $u$ on the cable system that we construct in Section 2. It refines the coupling in \cite{Lupu}, in essence, through the use of the strong Markov property of the Gaussian free field on the cable system. Our main statement appears in Theorem \ref{theo2.3}.

\medskip
We then consider in Sections 3 and 4 an application to level-set percolation of the Gaussian free field on the $(d+1)$-regular tree $T$ endowed with unit weights (with $d \ge 2$). We are interested by the percolative property of $\{\varphi \ge h\}$, for $h$ in $\IR$, with $(\varphi_x)_{x \in T}$ the Gaussian free field on $T$. We introduce the quantity (see (\ref{3.17}) for its spectral interpretation)
\begin{equation}\label{0.9}
\lambda_h = \sup\Big\{\mbox{\f $\dis\frac{d^2}{\sqrt{d^2 - 1}}$} \dis\int^\infty_h \,\dis\int^\infty_h e^{-\frac{a^2}{2d} + ab - \frac{b^2}{2d}} f(a) \,f(b) \,\nu(da)\, \nu(db); \; \dis\int_\IR f^2 (a) \,\nu(da) = 1\Big\},
\end{equation}
where $\nu$ is the centered Gaussian law
\begin{equation}\label{0.10}
\nu(da) = \mbox{\f $\dis\frac{1}{\sqrt{2 \pi \sigma}}$} \;e^{-\frac{a^2}{2\sigma^2}} \,da, \;\mbox{with} \; \sigma^2 \stackrel{\rm def}{=} \; \mbox{\f $\dis\frac{d}{d^2 - 1}$} .
\end{equation}
We show in Proposition \ref{prop3.1} that
\begin{equation}\label{0.11}
\mbox{$h \r \lambda_h$ is a continuous decreasing bijection between $\IR$ and $(0,d)$,}
\end{equation}

\n
and identify in Proposition \ref{prop3.3} the critical value $h_*$ for level-set percolation of $\varphi$:
\begin{equation}\label{0.12}
\mbox{$h_*$ is the unique value such that $\lambda_{h_*} = 1$}
\end{equation}

\n
(so $\{\varphi \ge h\}$ only has finite components when $h > h_*$, and has an infinite component when $h < h_*$).

\medskip
Level-set percolation of $(\varphi_x)_{x \in T}$ can be recast in terms of the study of a branching Markov chain with a barrier, where the Markov chain under consideration corresponds to a suitable Ornstein-Uhlenbeck transition operator, see Section 3. This is very much in the spirit of some of the models discussed in \cite{BiggKypr04}, and indeed, we employ methods of the theory of branching processes in Section 3, which in turn require gaining information on various spectral objects underlying $\lambda_h$ from (\ref{0.9}).

\medskip
Unlike percolation for the vacant set $\cV^u$ of random interlacements, where the critical value $u_*$ is explicit in the sense that $d \exp\{-u_* \frac{(d-1)}{d}^2\} = 1$, see \cite{Teix09b}, there does not seem to be a closed formula for $h_*$, or for $\lambda_h$. Our main results in Section 4 provide bounds on $\lambda_h$ and $h_*$. We show in Theorem \ref{theo4.3} that
\begin{equation}\label{0.13}
\lambda_h > d \overline{\Phi}\big(h \mbox{\f $\dis\frac{(d-1)}{\sqrt{d}}$}\big), \; \mbox{for $h \in \IR$, with $\overline{\Phi} (a) = \mbox{\f $\dis\frac{1}{\sqrt{2 \pi}}$}\;\dis\int^\infty_a e^{-\frac{t^2}{2}} \;dt$}
\end{equation}
(so $\lambda_0 > \frac{d}{2}$), and that

\vspace{-3ex}
\begin{equation}\label{0.14}
\lambda_h \le \lambda_0 \,e^{-\frac{h^2}{2} \;\frac{(d-1)^2}{d}}, \; \mbox{for $h \ge 0$}.
\end{equation}

\n
The upper bound (\ref{0.14}) comes as an application of the coupling (\ref{0.8}) (and we do not know of a direct proof). As a result of (\ref{0.12}) - (\ref{0.14}) we bound $h_*$ in Corollary \ref{cor4.5}:
\begin{equation}\label{0.15}
\begin{array}{c}
0 \le h_\Delta < h_* \le h_\square < \sqrt{2 u_*}, \;\mbox{with}
\\[1ex]
 d \overline{\Phi} \big(h_\Delta\, \mbox{\f $\dis\frac{(d-1)}{\sqrt{d}}$}\big) = 1, \; \lambda_0\, e^{-\frac{h^2_\square (d-1)^2}{2d}} = 1, \;\mbox{and}\;  d \,e^{-u_* \,\frac{(d-1)^2}{d}} =1.  
\end{array}
\end{equation}

\n
In particular, we prove that $0 < h_* < \sqrt{2u_*}$. One can naturally wonder how general this inequality is, see also Remark \ref{rem4.6}.

\medskip
We now explain how this article is organized. In Section 1, we collect useful information on the cable system, on the strong Markov property of the Gaussian free field on the cable system, and on the coupling from \cite{Lupu}. In Section 2, we construct our main coupling in Theorem \ref{theo2.3}. The application (\ref{0.8}) is shown in Corollary \ref{cor2.4}, see also Remark \ref{rem2.6}. Section 3 brings into place the set-up for the case of regular trees. The claims (\ref{0.11}) and (\ref{0.12}) respectively appear in Proposition \ref{prop3.1} and \ref{prop3.3}. In Section 4, we derive the main bounds (\ref{0.13}), (\ref{0.14}) on $\lambda_h$ in Theorem \ref{theo4.3}, and the resulting bounds on $h_*$ in Corollary \ref{cor4.5}. 

\bigskip\n
{\bf Acknowledgments:} We wish to thank Titus Lupu for useful discussions.

 \section{Cable systems: some preliminaries}
 \setcounter{equation}{0}

In this section we collect some facts concerning various objects attached to the cable system. In particular, we discuss the strong Markov property of the Gaussian free field on the cable system, as well as some features of random interlacements and the isomorphism theorem on the cable system. 

\medskip
We keep the notation from the introduction. Given our basic locally finite, connected, transient weighted graph with vertex set $E$, the cable system (or metric graph) is obtained by attaching to each edge $e = \{x,y\}$ of the above graph a compact interval with length $(2 c_{x,y})^{-1}$ with endpoints respectively identified to $x$ and $y$. We denote by $\wt{E} \supseteq E$ the resulting set obtained by glueing the above intervals so that $\wt{E} \backslash E$ is the disjoint union of the sets $I_e$, so that for each edge $e$, $I_e$ is homeomorphic to the interior of the interval attached to $e$. We denote by $m$ the Lebesgue measure on $\wt{E}$. For further details, we also refer to Section 2 of \cite{Lupu}, Section 2 of \cite{Folz14}, and Section 1 of \cite{Varo85}, as well as to Section 3 of \cite{Zhai}, which discusses a discrete space approximation to the cable system.

\medskip
We also denote by $d(\cdot,\cdot)$ the (geodesic) distance on $\wt{E}$ for which we attach length 1 to each $I_e$ (instead of $(2 c_{x,y})^{-1}$, with $e = \{x,y\}$), so that the restriction of $d(\cdot,\cdot)$ to $E \times E$ coincides with the graph distance on $E$. For $x \in E$, $N \ge 1$, we set
\begin{equation}\label{1.1}
\begin{split}
B_N(x) & = \{z \in \wt{E}; \,d(x,z) \le N\}, \; B^{\circ}_N(x) = \{x \in \wt{E}; \, d(x,z) < N\}, \;\mbox{and}
\\
S_N(x) & = \{z \in \wt{E}; \,d(x,z) = N\} = \{y \in E; \, d(x,y) = N\}.
\end{split}
\end{equation}

\n
On $\wt{E}$ one can define a continuous diffusion, via probabilities $\wt{P}_z, z \in \wt{E}$, governing $X$ the canonical process with possibly finite life on $\wt{E}$, so that on each $I_e$, $X$ behaves as a standard Brownian motion. This diffusion has continuous space-time local times with respect to the Lebesgue measure $m$ on $\wt{E}$, and when in $x \in E$, reaches a neighboring site in $E$ after a local time at $x$, which is exponential with parameter $\lambda_x$, see (\ref{0.1}), and equal to $y \sim x$ (among all neighbors of $x$) with probability $c_{x,y}/\lambda_x$, see Section 2 of \cite{Lupu}.

\medskip
Given $U$ open subset of $\wt{E}$ and $B$ closed subset of $\wt{E}$, we denote by $T_U = \inf\{s \ge 0; X_s \notin U\}$ and $H_B = \inf\{s \ge 0; X_s \in B\}$ the respective exit time from $U$ and entrance time in $B$ of $X$. We will also occasionally consider the case when $B$ is open (for instance in Lemma \ref{lem1.4}). Given $U \subseteq \wt{E}$ open, we denote by $\wt{g}_U(\cdot,\cdot)$ the Green function with respect to the measure $m$ of the diffusion on the cable killed when exiting $U$, so that
\begin{equation}\label{1.2}
\mbox{$\wt{g}_U(z,z')$ is continuous, symmetric, and vanishes if $z$ or $z'$ is not in $U$,}
\end{equation}

\n
and for all $z \in \wt{E}$, $\wt{g}_U(z,\cdot)$ ($= \wt{g}_U(\cdot,z)$) is harmonic on $U \backslash \{z\}$. When $U = \wt{E}$, one recovers the Green function $\wt{g}(\cdot,\cdot)$ mentioned in the introduction. It has the property (see (\ref{0.2}) for notation)
\begin{equation}\label{1.3}
\wt{g}(x,y) = g(x,y), \;\mbox{for $x,y \in E$}.
\end{equation}

\n
The killed Green function also has the monotone convergence property: when $U_n \uparrow U$,
\begin{equation}\label{1.4}
\wt{g}_U (z,z') = \lim\limits_n \uparrow \wt{g}_{U_n}(z,z'), \;\mbox{for $z,z' \in \wt{E}$}
\end{equation}

\n
(for instance this follows from monotone convergence for resolvents combined with the continuity and harmonicity of $\wt{g}_{U_n}(z,\cdot)$ and $\wt{g}_U(z,\cdot)$).

\medskip
We now turn to the Gaussian free field on the cable system $\wt{E}$. On the canonical space $\wt{\Omega}$ of continuous real-valued functions on $\wt{E}$ endowed with the canonical $\sigma$-algebra $\cA$ generated by the canonical coordinates $\wt{\varphi}_z$ (we also sometimes write $\wt{\varphi}(z)$), $z \in \wt{E}$, we consider the probability $\wt{\IP}^G$ such that
\begin{equation}\label{1.5}
\begin{array}{l}
\mbox{under $\wt{\IP}^G$, $(\wt{\varphi}_z)_{z \in \wt{E}}$ is a centered Gaussian process with covariance}
\\[0.3ex]
\wt{\IE}^G[\wt{\varphi}_z \wt{\varphi}_{z'}] = \wt{g}(z,z') \;\mbox{(where $\wt{\IE}^G$ denotes $\wt{\IP}^G$-expectation}).
\end{array}
\end{equation}
By (\ref{0.3}) and (\ref{1.5}), we see that
\begin{equation}\label{1.6}
\mbox{the law of $(\wt{\varphi}_x)_{x \in E}$ under $\wt{\IP}^G$ is equal to $\IP^G$}.
\end{equation}

\n
We first state the Markov property of $\wt{\varphi}$. We consider $K \subseteq \wt{E}$ a compact subset with finitely many connected components (note that $\partial K$ is finite). For $U = \wt{E} \backslash K$ and $z \in \wt{E}$, we set (with $T_U$ the exit time of $X$ from $U$)
\begin{equation}\label{1.7}
h_U(z) = \wt{E}_z [\wt{\varphi}(X_{T_U}), \, T_U < \infty], \, z \in \wt{E}.
\end{equation}

\n
This function is continuous, coincides with $\varphi$ on $K$, and tends to $0$ at infinity. It is also harmonic on $U$. Then $\wt{\varphi}_z - h_U(z)$, $z \in \wt{E}$, is a continuous function, which vanishes on $K$ and the Markov property states that
\begin{equation}\label{1.8}
\begin{array}{l}
\mbox{$\big(\wt{\varphi}_z - h_U(z)\big)_{z \in \wt{E}}$ is a centered Gaussian field with covariance $\wt{g}_U(\cdot,\cdot)$}
\\
\mbox{independent of $\cA_K^+$, where $\cA^+_K$ is the $\sigma$-algebra}
\end{array}
\end{equation}

\vspace{-2ex}
\begin{equation}\label{1.9}
\cA^+_K = \bigcap_{\varepsilon > 0} \, \sigma (\wt{\varphi}_z, z \in K^\ve)
\end{equation}

\smallskip\n
with $K^\ve$ the open $\ve$-neighborhood of $K$ in the $d(\cdot,\cdot)$-distance, see above (\ref{1.1}). Incidentally, for $F \subseteq \wt{E}$, we will write $\cA_F = \sigma(\wt{\varphi}_z, z \in F)$ ($\subseteq \cA = \cA_{\wt{E}}$).

\medskip
We now turn to the strong Markov property. We say that $\cK$ is a compatible random compact subset of $\wt{E}$ (or as a shorthand that $\cK$ is compatible, see chapter 2 \S 3 of \cite{Roza82} for the terminology), if for each $\o \in \wt{\Omega}$, $\cK(\o) \subseteq \wt{E}$ is a compact subset with finitely many components, which is the closure of its interior, and such that for any open subset $O \subseteq \wt{E}$
\begin{equation}\label{1.10}
\{\cK \subseteq O\} (\stackrel{\rm def}{=} \{\o \in \wt{\Omega}; \, \cK(\o) \subseteq O\}) \in \cA_O \;\mbox{(see below (\ref{1.9}) for the notation)}.
\end{equation}

\n
We attach to $\cK$ the $\sigma$-algebra, see Theorem 2, p.~89 of \cite{Roza82}
\begin{align}
\cA^+_\cK = \{ &A \in \cA; \,A \cap \{\cK \subseteq K\} \in \cA^+_K, \;\mbox{for all $K \subseteq E$, which is} \label{1.11}
\\
&\mbox{compact and the closure of its interior$\}$}. \nonumber
\end{align}

\n
The lemma below and the subsequent remark collect some useful properties of $\cA^+_\cK$.

\begin{lemma}\label{lem1.1}
Let $\cK$ be compatible and define the $\sigma$-algebra $\cG_\cK = \sigma(\{\cK \subseteq O\}$; $O$ open subset of $\wt{E})$. Then, one has
\begin{equation}\label{1.12}
\left\{ \begin{array}{rll}
{\rm i)} & \cG_\cK \subseteq \cA^+_{\cK},
\\[1ex]
{\rm ii)} & \mbox{for each $z \in \wt{E}$}, & \{z \in \cK\} \in \cG_\cK \,(\subseteq \cA^+_\cK) ,
\\[1ex]
{\rm iii)} & \mbox{for each $z \in \wt{E}$}, & \mbox{$\wt{\varphi}_z \,1\{z \in \cK\}$ is $\cA^+_\cK$-measurable~.}
\end{array}\right.
\end{equation}
\end{lemma}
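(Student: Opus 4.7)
The plan is to treat (i), (ii), (iii) in order. Once (i) is established, (ii) follows at once by writing $\{z \in \cK\}$ as the complement of an event of the form $\{\cK \subseteq O\}$, and (iii) drops out by decomposing preimages under $\wt{\varphi}_z \, 1\{z \in \cK\}$ into events built from $\{z \in \cK\}$, $\{z \notin \cK\}$ and $\{\wt{\varphi}_z \in B\}$. The only technical input, used once and then reused, is the approximation identity
\begin{equation*}
\{\cK \subseteq K\} = \bigcap_{\varepsilon > 0} \{\cK \subseteq K^\varepsilon\},
\end{equation*}
valid for any closed $K \subseteq \wt{E}$: if $z \in \cK \setminus K$, then $d(z,K) > 0$ since $K$ is closed, so $z \notin K^\varepsilon$ for some $\varepsilon > 0$. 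Since each $K^\varepsilon$ is open, the compatibility assumption (\ref{1.10}) gives $\{\cK \subseteq K^\varepsilon\} \in \cA_{K^\varepsilon}$, whence $\{\cK \subseteq K\} \in \cA^+_K$.

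For (i), I fix an open $O \subseteq \wt{E}$ and a compact $K$ which is the closure of its interior. For every $\varepsilon > 0$, using $\{\cK \subseteq K\} \subseteq \{\cK \subseteq K^\varepsilon\}$,
\begin{equation*}
\{\cK \subseteq O\} \cap \{\cK \subseteq K\} = \{\cK \subseteq O \cap K^\varepsilon\} \cap \{\cK \subseteq K\}.
\end{equation*}
The first factor on the right lies in $\cA_{O \cap K^\varepsilon} \subseteq \cA_{K^\varepsilon}$ by (\ref{1.10}) applied to the open set $O \cap K^\varepsilon$, and the second lies in $\cA_{K^\varepsilon}$ by the previous paragraph. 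Hence the intersection lies in $\cA_{K^\varepsilon}$ for every $\varepsilon > 0$, so it lies in $\cA^+_K$. By definition (\ref{1.11}), this gives $\{\cK \subseteq O\} \in \cA^+_\cK$, and varying $O$ proves (i).

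Part (ii) is now immediate: $\wt{E}$ is a metric space, so $\{z\}$ is closed, $\wt{E} \setminus \{z\}$ is open, and $\{z \in \cK\} = \wt{\Omega} \setminus \{\cK \subseteq \wt{E} \setminus \{z\}\}$ belongs to $\cG_\cK$. For (iii), given a Borel set $B \subseteq \IR$, I would check separately that $\{z \notin \cK\}$ and $\{z \in \cK\} \cap \{\wt{\varphi}_z \in B\}$ both lie in $\cA^+_\cK$. The first follows from (i)--(ii). For the second, intersecting with $\{\cK \subseteq K\}$ for each admissible compact $K$ gives the empty set when $z \notin K$, while for $z \in K$ it splits as $\{\wt{\varphi}_z \in B\} \in \cA_K \subseteq \cA^+_K$ intersected with $\{z \in \cK\} \cap \{\cK \subseteq K\} \in \cA^+_K$. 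Putting these together via
\begin{equation*}
\{\wt{\varphi}_z \, 1\{z \in \cK\} \in B\} = \big(\{z \in \cK\} \cap \{\wt{\varphi}_z \in B\}\big) \cup \big(\{z \notin \cK\} \cap \{0 \in B\}\big)
\end{equation*}
yields (iii).

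I expect the only genuine subtlety to be the approximation step: the definition (\ref{1.11}) of $\cA^+_\cK$ is framed in terms of compact sets $K$, whereas the compatibility assumption (\ref{1.10}) provides open-set measurability, and the identity $\{\cK \subseteq K\} = \bigcap_\varepsilon \{\cK \subseteq K^\varepsilon\}$ is precisely what bridges the two; once this is in hand, everything else is bookkeeping against the definitions.
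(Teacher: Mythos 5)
Your proof is correct and follows essentially the same route as the paper: the identity $\{\cK \subseteq O\}\cap\{\cK \subseteq K\}$ expressed through the open neighborhoods $K^\ve$ (the paper's (\ref{1.13a})) to bridge (\ref{1.10}) and (\ref{1.11}), then ii) by taking $O=\{z\}^c$, and iii) by the case distinction $z\in K$ versus $z\notin K$ combined with $\cA_K\subseteq\cA^+_K$ and ii). The only differences are organizational (you isolate $\{\cK\subseteq K\}\in\cA^+_K$ first and argue iii) at the level of events rather than of the product random variable), and they do not change the substance.
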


\begin{proof}
To prove i), we note that for any $K$ as in (\ref{1.11}) and $O$ open in $\wt{E}$,
\begin{equation}\label{1.13a}
\{\cK \subseteq O\} \cap \{\cK \subseteq K\} = \bigcap\limits_{\ve > 0} \,\{ \cK \subseteq O \cap K^\ve\} \stackrel{(\ref{1.10})}{\in} \cA^+_K,
\end{equation}

\smallskip\n
whence $\{\cK \subseteq O\} \in \cA^+_\cK$ and i) follows. To prove ii) we choose $O = \{z\}^c$ and apply i).

\pagebreak
As for iii), consider $z \in \wt{E}$, and $K$ as in (\ref{1.11}). Then, $\wt{\varphi}_z \,1\{z \in \cK, \cK \subseteq K\}$ vanishes identically on $\wt{\Omega}$ if $z \notin K$, and if $z \in K$, both $\wt{\varphi}_z$ and $1 \{z \in \cK, \cK \subseteq K\}$ are $\cA^+_K$-measurable by ii) and (\ref{1.13a}). The claim iii) follows.
\end{proof}

\begin{remark}\label{rem1.2} \rm ~

\medskip\n
1) If $\cK$ and $\cL$ are compatible random compact subsets such that $\cK \subseteq \cL$, then 
\begin{equation}\label{1.13}
\cA^+_\cK \subseteq \cA^+_\cL
\end{equation}
(indeed, for $A \in \cA^+_\cK$ and $K$ as in (\ref{1.11}), $A \cap \{\cL \subseteq K\} = A \cap \{\cK \subseteq K\} \cap \{\cL \subseteq K\} \in \cA^+_K\}$).

\bigskip\n
2) As described in Chapter 2 \S 4 of \cite{Roza82}, one can approximate a compatible compact subset $\cK$ from above as follows. One chops (in a dyadic procedure) each compact segment attached to an edge of $E$ into $2^n$ closed segments of equal length, and defines $\cK^n$ as the union of the finitely many such segments intersecting the interior of $\cK$. In this fashion $\cK^n$ takes values in a countable set (of possible ``shapes''), and
\begin{equation}\label{1.14}
\left\{ \begin{array}{rll}
{\rm i)} &\cK^n \downarrow \cK,
\\[1ex]
{\rm ii)} & \mbox{$\cK^n$ is compatible for each $n$,} 
\\[1ex]
{\rm iii)} &\cA^+_{\cK^n} \downarrow \cA^+_\cK
\end{array}\right.
\end{equation}

\medskip\n
(in the case of iii) simply note that for $A \in \bigcap_n \cA^+_{\cK^n}$ and $K$ as in (\ref{1.11}), one has for $\ve > 0$, $A \cap \{\cK \subseteq K^\ve\} = \bigcup_n A \cap \{\cK^n \subseteq K^\ve\}$ which belongs to $\cA_{K^\ve}$ as a straightforward consequence of ii), so that $A \cap \{\cK \subseteq K\} = \bigcap_{m \ge 1} A \cap \{\cK \subseteq K^{1/m}\} \in \cA^+_K$, and iii) follows). Actually, one also has $\cG_{\cK^n} \uparrow \cG_\cK$, see \cite{Roza82}, p.~87, but we will not need this fact.

\bigskip\n
3) If $\cK$ is a compatible random compact subset of $\wt{E}$, and $\cK^n$ defined as above, we set 
\begin{equation*}
\mbox{$\cU = \wt{E} \backslash \cK$ and $\cU^n = \wt{E} \backslash \cK^n$, so that $\cU^n \uparrow \cU$}.
\end{equation*}

\n
Then, for each $n \ge 0$, $z,z' \in \wt{E}$, by (\ref{1.14}) ii) (and using also (\ref{1.12}) iii) for $h_{\cU^n}(z)$)
\begin{equation}\label{1.15}
\mbox{$h_{\cU^n}(z)$ and $\wt{g}_{\cU^n} (z,z')$ are $\cA^+_{\cK^n}$-measurable},
\end{equation}

\n
(actually, $\wt{g}_{\cU^n}(z,z')$ is even $\cG_{\cK^n}$-measurable).

\medskip
Moreover, by dominated convergence, cf.~(\ref{1.7}), and the monotone convergence property (\ref{1.4})
\begin{equation}\label{1.16}
h_{\cU}(z) = \lim\limits_n h_{\cU^n}(z) \;\mbox{and} \; \wt{g}_\cU (z,z') = \lim\limits_n \uparrow \wt{g}_{\cU^n} (z,z'),
\end{equation}
and by (\ref{1.14}) iii) we see that
\begin{equation}\label{1.17}
\mbox{$h_\cU(z)$ and $\wt{g}_\cU(z,z')$ are $\cA^+_\cK$-measurable}
\end{equation}

\smallskip\n
(and respectively continuous in $z$, and $z,z'$). \hfill $\square$
\end{remark}

\medskip
We can now state the {\it strong Markov property} of $\wt{\varphi}$, see in particular Theorem 4, p.~92 of \cite{Roza82}. When $\cK$ is a compatible random compact subset of $\wt{E}$,
\begin{align}
&\mbox{under $\wt{\IP}^G$, conditionally on $\cA^+_{\cK}$, $(\wt{\varphi}_z)_{z \in \wt{E}}$ is a Gaussian field} \label{1.18}
\\
&\mbox{with mean $\big(h_\cU(z)\big)_{z \in \wt{E}}$ and covariance $\wt{g}_\cU(\cdot,\cdot)$}. \nonumber
\end{align}

\n
We now turn to the discussion of random interlacements on $\wt{E}$. For $C$ finite subset of $E$, we denote by ${\rm cap}(C)$ the capacity of $C$ and by $e_C$ the equilibrium measure of $C$ so that
\begin{equation}\label{1.19}
\begin{array}{l}
\mbox{${\rm cap}(C) = \sup\{E(\mu,\mu)^{-1}$; $\mu$ probability supported by $C\}$, where}
\\
E(\mu,\mu) = \fr \;\dsl_{x,y \in E} \mu(x) \,g(x,y)\, \mu(y),
\end{array}
\end{equation}
and one knows that when $C$ is not empty, the normalized equilibrium measure $e_C/{\rm cap}(C)$ is the unique maximizer in (\ref{1.19}).

\medskip
We consider a given level

\vspace{-5ex}
\begin{equation}\label{1.20}
u > 0
\end{equation}

\n
and on some suitable probability space $(\wt{W}, \cB, \wt{\IP}^I)$ a continuous non-negative random field $(\wt{\ell}_{z,\o})_{z \in \wt{E}}$ describing the field of local times with respect to the Lebesgue measure $m$ on $\wt{E}$ of random interlacements at level $u$ on $\wt{E}$, see Section 6 of \cite{Lupu}. If $x \in E$, and $B_N(x)$ is defined as in (\ref{1.1}), then $(\wt{\ell}_{z,u})_{z \in B_N(x)}$ is distributed as the restriction to $B_N(x)$ of the local time of a Poisson cloud of trajectories on $\wt{E}$ with intensity $u \,\wt{P}_{e_C}$, where $C = B_N(x) \cap E$, and $\wt{P}_{e_C} = \sum_{y \in E} e_C(y) \wt{P}_y$. Moreover (after discarding a negligible set), we can assume that
\begin{align}
&\wt{\cI}^u = \{z \in \wt{E}; \,\wt{\ell}_{z,u} > 0\} \;\mbox{is an open set which has only  unbounded}   \label{1.21}
\\
&\mbox{connected components,} \nonumber
\\[1ex]
&\partial \wt{\cI}^u \cap E = \phi, \label{1.22}
\end{align}

\n
where for $A \subseteq \wt{E}$, $\partial A$ denotes the boundary of $A$.

\medskip
It also readily follows that the restriction of $\wt{\ell}_{\point,u}$ to $E$ is distributed as the field of occupation times of random interlacements at level $u$ on $E$, cf.~(\ref{0.4}):
\begin{equation}\label{1.23}
\mbox{$(\wt{\ell}_{x,u})_{x \in E}$ under $\wt{\IP}^I$ has same law as $(\ell_{x,u})_{x \in E}$ under $\IP^I$}.
\end{equation}
In particular, cf.~(\ref{0.5}),
\begin{equation}\label{1.24}
\mbox{$\wt{\cI}^u \cap E$ under $\wt{\IP}^I$ has the same law as $\cI^u$ under $\IP^I$},
\end{equation}
so that for any finite subset $C$ of $E$
\begin{equation}\label{1.25}
\wt{\IP}^I [\wt{\cI}^u \cap C = \phi] = \IP^I[\cI^u \cap C = \phi] = e^{-u \,{\rm cap}(C)}.
\end{equation}

\medskip\n
As in Proposition 6.3 and Theorem 3 of \cite{Lupu}, one can construct some extension $(\Sigma, \cF, \wt{\IQ})$ of the product space $(\wt{\Omega} \times \wt{W}, \cA \otimes \cB, \wt{\IP}^G \otimes \wt{\IP}^I)$ endowed with the continuous fields $\wt{\gamma} = (\wt{\gamma}_z)_{z \in \wt{E}}$, $\wt{\varphi} = (\wt{\varphi}_z)_{z \in \wt{E}}$ and the non-negative continuous field $\wt{\ell} = (\wt{\ell}_{z,u})_{z \in \wt{E}}$ such that
\begin{align}
& \mbox{$(\wt{\varphi},\wt{\ell})$ under $\wt{\IQ}$ has the same law as under $\wt{\IP}^G \otimes \wt{\IP}^I$} \label{1.26}
\\[1ex]
& \mbox{both (\ref{1.21}) and (\ref{1.22}) hold} \label{1.27}
\\[1ex]
&\mbox{$\wt{\gamma}$ has the same law as $\wt{\varphi}$} \label{1.28}
\\[1ex]
& \mbox{$\wt{\IQ}$-a.s., for all $z \in \wt{E}$}, \;\fr \,(\wt{\gamma}_z - \sqrt{2u})^2 = \fr \, \wt{\varphi}^2_z + \wt{\ell}_{z,u}\,. \label{1.29}
\end{align}

\n
This coupling due to \cite{Lupu} sharpens (\ref{0.6}) and by (\ref{1.29})
\begin{equation}\label{1.30}
\mbox{$\wt{\IQ}$-a.s., $\wt{\gamma} - \sqrt{2u}$ does not vanish on $\wt{\cI}^u$ ($= \{z \in \wt{E}; \wt{\ell}_{z,u} > 0\}$).}
\end{equation}

\n
From now on, we will make the following assumption on the Gaussian free field on $\wt{E}$:
\begin{equation}\label{1.31}
\mbox{$\wt{\IP}^G$-a.s., $\{z \in \wt{E}; \,\wt{\varphi}_z > 0\}$ only has bounded connected components.}
\end{equation}

\begin{remark}\label{rem1.3} \rm
As shown in Section 5 of \cite{Lupu}, (\ref{1.31}) holds in particular in the case of $\IZ^d$, $d \ge 3$, endowed with unit weights. We will also see in Proposition \ref{prop4.1} below that (\ref{1.31}) holds as well when $E$ is the $(d+1)$-regular tree endowed with unit weights, and $d \ge 2$. \hfill $\square$
\end{remark}

Observe that $\wt{\cI}^u$ only has unbounded connected components and that $\wt{\gamma} - \sqrt{2u}$ does not vanish on $\wt{\cI}^u$. On the other hand, due to (\ref{1.31}), (\ref{1.28}), a.s.~$\{\wt{\gamma} > \sqrt{2u}\}$ only has bounded components. As a result, we see that
\begin{equation}\label{1.32}
\mbox{$\wt{\IQ}$-a.s., $\wt{\cI}^u \subseteq \{ \wt{\gamma} < \sqrt{2u}\}$ ($= \{z \in \wt{E}; \wt{\gamma}_z < \sqrt{2u}\}$)}.
\end{equation}

\n
It will also be technically convenient to make the (mild) assumption that
\begin{equation}\label{1.33}
\sup\limits_{x \in E} g(x,x) = g_0 < \infty \,.
\end{equation}

\n
This assumption will be used in the proof of the next lemma, as well as in the proofs of Proposition \ref{prop2.1} and Theorem \ref{theo2.3} in the next section. A useful but slightly more technical generalization of (\ref{1.33}) appears in (\ref{1.40a}) (or equivalently (\ref{1.40b})) of Remark \ref{rem1.5} below. We recall that $H_B$ stands for the entrance time of $X$ in $B$, see above (\ref{1.2}).

\begin{lemma}\label{lem1.4}
\begin{equation}\label{1.34}
\mbox{$\wt{\IP}^I$-a.s., for all $z \in \wt{E}$, $\wt{P}_z [H_{\wt{\cI}^u} < \infty] = 1$}.
\end{equation}
\end{lemma}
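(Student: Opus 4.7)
The plan is to reduce, in stages, to a claim about the capacity of the range of the embedded discrete walk, which is the main technical point.

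First, by the countability of $E$ and the strong Markov property, it suffices to show that for each fixed $x\in E$, $\wt{\IP}^I$-a.s.\ one has $\wt{P}_x[H_{\wt{\cI}^u}<\infty]=1$: indeed, for $z$ in the interior of a cable $I_e$ with endpoints $x,y\in E$, the diffusion is a standard one-dimensional Brownian motion on $I_e$, so $\wt{P}_z$-a.s.\ it either enters $\wt{\cI}^u$ inside $I_e$ (and we are done) or reaches $\{x,y\}$ in finite time. Moreover, since $\wt{\cI}^u\cap E=\cI^u$ by the construction of the interlacement local times on $\wt{E}$, and since the cable diffusion sampled at its successive visits to $E$ coincides in law with the discrete walk $(Z_k)_{k\ge 0}$ under $P_x$, it further suffices to show that $\wt{\IP}^I$-a.s.\ $P_x[H_{\cI^u}<\infty]=1$.

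By Fubini this reduces to $\wt{\IP}^I\otimes P_x[R_\infty\cap\cI^u=\emptyset]=0$, where $R_\infty:=\{Z_k:k\ge 0\}$. The independence of $(Z_k)_k$ and $\wt{\cI}^u$, together with (\ref{1.25}) applied to the truncations $R_N:=\{Z_0,\dots,Z_N\}$ and monotone convergence as $N\to\infty$, yield
\[
\wt{\IP}^I\otimes P_x\bigl[R_\infty\cap\cI^u=\emptyset\,\big|\,(Z_k)_{k\ge 0}\bigr]\;=\;\exp\bigl(-u\,\mathrm{cap}(R_\infty)\bigr),
\]
with $\mathrm{cap}(R_\infty):=\sup_N\mathrm{cap}(R_N)\in(0,\infty]$. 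Thus the whole statement reduces to $\mathrm{cap}(R_\infty)=+\infty$, $P_x$-a.s.

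This last reduction is the main obstacle, and it is where assumption (\ref{1.33}) enters. The idea is to use transience to extract, via a last-visit construction, an a.s.\ infinite sequence of distinct vertices $W_1=x,W_2,\ldots$ visited by the walk, with the property that the walk no longer returns to $A_n:=\{W_1,\ldots,W_n\}$ once it has first reached $W_{n+1}$. Combined with the uniform lower bound $1/g(y,y)\ge 1/g_0$ coming from (\ref{1.33}) and a last-exit decomposition to control the escape probabilities $P_y[\tilde{H}_{A_n}=\infty]$ entering the equilibrium measure $e_{A_n}$, this is meant to yield $\mathrm{cap}(A_n)\to\infty$, hence $\mathrm{cap}(R_\infty)=\infty$. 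A countable union over $x\in E$ together with the initial reduction then concludes.
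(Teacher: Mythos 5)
Your chain of reductions -- from general $z\in\wt{E}$ to $x\in E$, then to the discrete walk, then via (\ref{1.25}) and monotone convergence to the statement that $P_x$-a.s.\ ${\rm cap}(\{Z_0,\dots,Z_N\})\rightarrow\infty$ -- is exactly the paper's. The gap is in the one step you leave as a sketch, and it is the heart of the lemma. Knowing that $W_1,\dots,W_n$ are distinct, that the trajectory never returns to $A_n$ after first reaching $W_{n+1}$, and that $g(y,y)\le g_0$, does not yield ${\rm cap}(A_n)\rightarrow\infty$. Capacity is subadditive, not superadditive, so the singleton bound ${\rm cap}(\{y\})=1/g(y,y)\ge 1/g_0$ gives nothing for the union, and the escape probabilities $P_{W_j}[\wt{H}_{A_n}=\infty]$ entering $e_{A_n}$ may perfectly well tend to $0$ as $n$ grows: the no-return property is a statement about one trajectory after its last visit to $A_n$, not a quantitative lower bound on escape probabilities from the earlier points $W_1,\dots,W_n$ with respect to the growing set $A_n$. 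Nothing in your construction prevents the selected points from clustering in the Green metric, i.e.\ $g(W_i,W_j)$ of order $g_0$ for all pairs, in which case ${\rm cap}(A_n)$ stays of order $1/g_0$ uniformly in $n$; your stated inputs do not exclude this scenario, so the claimed divergence does not follow.

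What is missing is precisely the quantitative separation step on which the paper's proof rests: one first shows (\ref{1.37}), namely $g(Z_k,y)\rightarrow 0$ $P_x$-a.s.\ for each fixed $y$, by the martingale convergence/transience argument of (\ref{1.38}); one then extracts stopping times as in (\ref{1.39}), so that each newly selected point $Z_{S_{\ell+1}}$ satisfies $\sum_{1\le j\le \ell} g(Z_{S_{\ell+1}},Z_{S_j})\le g_0/2$; finally, instead of equilibrium measures, one uses the energy characterization (\ref{1.19}) with the uniform measure on $\{Z_{S_1},\dots,Z_{S_\ell}\}$, whose energy is at most $2g_0/\ell$ by (\ref{1.33}) and the choice of the $S_j$, giving ${\rm cap}\ge \ell/(2g_0)\rightarrow\infty$ as in (\ref{1.40}). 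If you prefer to keep an equilibrium-measure formulation, you would still need an analogue of this Green-metric separation to bound the escape probabilities from below, so the idea cannot be bypassed; as written, your proposal has a genuine gap at its main step.
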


\begin{proof}
It clearly suffices to treat the case of $z \in E$. Since for $x \in E$, the successive visits of $X$ to distinct sites of $E$ under $\wt{P}_x$ are distributed as the discrete time walk $(Z_k)_{k \ge 0}$ under $P_x$ (see below (\ref{0.1}) for notation), and $\wt{\cI}^u \cap E$ is distributed as $\cI^u$, our claim will follow once we show that
\begin{equation}\label{1.35}
\mbox{$\IP^I$-a.s., for all $x \in E, P_x[H_{\cI^u} < \infty] = 1$},  
\end{equation}

\medskip\n
where for $B \subseteq E$, $H_B = \inf\{ k \ge 0; Z_k \in B\}$ is the entrance time of $Z$ in $B$. To this end, we will show that for all $x \in E$,
\begin{equation}\label{1.36}
\mbox{$P_x$-a.s., ${\rm cap}(\{Z_0,Z_1,\dots,Z_k\}) \underset{k}{\longrightarrow} \infty$}.
\end{equation}

\n
The claim (\ref{1.35}) will readily follow since for all $k \ge 0$,
\begin{align*}
\IP^I \otimes P_x[H_{\cI^u} = \infty] & = E_x \big[\IP^{\cI} [ \cI^u \cap \{Z_0,\dots, Z_k, \dots\} = \phi]\big]
\\
&\!\!\! \stackrel{(\ref{1.25})}{\le} E_x [\exp\{- u \,{\rm cap}(\{Z_0,\dots,Z_k\})\}] \underset{k}{\stackrel{(\ref{1.36})}{\longrightarrow}} 0\,.
\end{align*}
We thus prove (\ref{1.36}), and for this purpose first observe that for $y,y' \in E$
\begin{equation}\label{1.37}
\mbox{$P_{y'}$-a.s., $g(Z_k,y) \underset{k}{\longrightarrow} 0$}.
\end{equation}
Indeed, for any $k_0 \ge 1$ and $k \ge k_0$, $P_{y'}$-a.s.,
\begin{equation}\label{1.38}
\begin{split}
\mbox{\f $\dis\frac{g(Z_k,y)}{g(y,y)}$} & = P_{Z_k} [H_{\{y\}} < \infty] = P_{y'} \;[\mbox{after time $k$, $Z$ visits $y | \sigma(Z_0,\dots,Z_k)]$}
\\[1ex]
& \le \mbox{$P_{y'} [$ after time $k_0$, $Z$ visits $y | \sigma (Z_0,\dots,Z_k)]$}
\\[1ex]
&\!\!\! \underset{k \rightarrow \infty}{\longrightarrow} 1\{\mbox{after time $k_0$, $Z$ visits $y\}$ (by martingale convergence)}.
\end{split}
\end{equation}

\n
By transience, $P_{y'}$-a.s. for large $k_0$, the indicator function on the last line of (\ref{1.38}) vanishes, and (\ref{1.37}) follows.

\medskip
We will now deduce (\ref{1.36}). We construct by induction an increasing sequence of a.s. finite stopping times, via $S_0 = 0$ and
\begin{equation}\label{1.39}
S_{\ell + 1} = \inf\big\{k > S_\ell; \dsl_{1 \le j \le \ell} g(Z_k, Z_{S_j}) \le \mbox{\f $\dis\frac{g_0}{2}$}\big\} \;\mbox{for $\ell \ge 0$, with $g_0$ as in (\ref{1.33})}.
\end{equation}

\n
We now set $\mu = \frac{1}{\ell} \sum^\ell_{i=1} \delta_{Z_{S_i}}$. We know by (\ref{1.19}) that
\begin{equation}\label{1.40}
\begin{array}{l}
{\rm cap}(\{Z_0,\dots,Z_{S_\ell}\}) \ge E (\mu,\mu)^{-1}, \;\mbox{where}
\\[1ex]
E(\mu,\mu) = \mbox{\f $\dis\frac{1}{\ell^2}$} \;\dsl^\ell_{i,j = 1} g(Z_{S_i}, Z_{S_\ell}) \stackrel{(\ref{1.33})}{\le} \mbox{\f $\dis\frac{g_0}{\ell}$} + \mbox{\f $\dis\frac{2}{\ell^2}$}\;\dsl_{j < i} \,g(Z_{S_i},Z_{S_j})
\\[2ex]
\qquad \quad \stackrel{(\ref{1.39})}{\le}  \mbox{\f $\dis\frac{2 g_0}{\ell}$}.
\end{array}
\end{equation}

\medskip\n
This bound once inserted in (\ref{1.40}) shows that the capacity in the left member of (\ref{1.40}) is at least $\ell/(2 g_0)$ and (\ref{1.36}) follows. This completes the proof of Lemma \ref{lem1.4}.
\end{proof}

\begin{remark}\label{rem1.5} \rm Let us point out a slightly more technical but useful generalization of (\ref{1.33}). Namely, this condition is the existence of an increasing sequence of  bounded open connected subsets $D^\circ_N$ of $\wt{E}$, increasing to $\wt{E}$, containing a base point $x_0$ of $E$, with boundaries $\Delta_N = \partial D^\circ_N$ contained in $E$, so that $D^\circ_N$ is the connected component of $x_0$ in $\wt{E} \backslash \Delta_N$, $D^\circ_{N+1} \supseteq D_N \stackrel{\rm def}{=} D^\circ_N \cup \Delta_N$ for each $N$, and 
\begin{equation}\label{1.40a}
\sup\limits_{N \ge 1} \; \sup\limits_{x \in \Delta_N} g(x,x) \le g_0 < \infty.
\end{equation}

\n
The proof of Lemma \ref{lem1.4} can be easily adapted to show that (\ref{1.34}) remains true if one replaces (\ref{1.33}) by (\ref{1.40a}). Indeed, one simply modifies the above definition of the stopping times $S_\ell$, $\ell \ge 0$, in (\ref{1.39}) so that setting $\Delta = \bigcup_{N \ge 1} \Delta_N$, $S_0 = \inf\{k \ge 0$; $Z_k \in \Delta\}$ and for $\ell \ge 0$, $S_{\ell + 1} = \inf\{k > S_\ell$; $Z_k \in \Delta$ and $\sum_{1 \le j \le \ell} g(Z_k, Z_{S_j}) \le \frac{g_0}{2}\}$. Noting that the walk $Z$ visits $\Delta$ infinitely often $P_x$-a.s., for any $x \in E$, the proof proceeds as below (\ref{1.39}). 

\medskip
Let us mention that the condition (\ref{1.40a}) is also equivalent to 
\begin{equation}\label{1.40b}
\{x \in E; g(x,x) > g_0\} \;\mbox{has no unbounded component}
\end{equation}

\n
(clearly (\ref{1.40a}) implies (\ref{1.40b}) and conversely, one defines by induction finite connected sets $C_N$ in $E$ containing $x_0$, with outer boundary $\Delta_N$, via $C_1$ consists of $x_0$ and the points linked to $x_0$ by a path where $g(\cdot,\cdot) > g_0$ prior to reaching $x_0$, and $C_{N+1}$ is the union of $C_N$, $\Delta_N$, and the points linked to $\Delta_N$ by a path where $g(\cdot,\cdot) > g_0$ prior to reaching $\Delta_N$. Then $D^\circ_N$ is defined as the connected component of $x_0$ in $\wt{E} \backslash \Delta_N$, and (\ref{1.40a}) holds).

\hfill $\square$
\end{remark}

\medskip
The above Lemma \ref{lem1.4} will be very helpful in the next section, see for instance Lemma \ref{lem1.5}, and also below (\ref{2.39}) in Theorem \ref{theo2.3}.

\section{The coupling}
\setcounter{equation}{0}

In this section we construct a coupling between the Gaussian free field on $\wt{E}$ and random interlacements at level $u$ on $\wt{E}$, which refines (\ref{1.26}) - (\ref{1.29}). The main result appears in Theorem \ref{theo2.3} and in essence follows from the application of the strong Markov property of the Gaussian free field. As a direct consequence of this main result we obtain a coupling of the Gaussian free field and random interlacements at level $u$ on $E$, which fulfills (\ref{0.8}), see Corollary \ref{cor2.4}, and thus refines (0.7) which was proven in \cite{Lupu}. This corollary will play an important role for the derivation of the upper bound (\ref{4.16}) in Theorem \ref{theo4.3}, when $E$ is a $(d+1)$-regular tree with unit weights.

\medskip
We keep the notation from the previous section and tacitly assume that (\ref{1.31}) holds (i.e. $\wt{\IP}^G$-a.s., $\{\wt{\varphi} > 0\}$ only has bounded components), and (\ref{1.33}) holds (i.e. $g(x,x)$ is uniformly bounded for $x \in E$, see however Remark \ref{rem2.2} 2) and Remark \ref{rem2.6} for a weakening of this assumption). We consider a given $u > 0$, cf.~(\ref{1.20}), and define for $\o \in \wt{\Omega}$
\begin{equation}\label{1.41}
\begin{split}
\cC_\infty(\o) & = \mbox{the closure of the union of unbounded components of $\{\wt{\varphi} < \sqrt{2u}\}$}
\\
& \,\quad \mbox{(a closed subset of $\wt{E}$)},
\\[1ex]
\cU_\infty (\o) & = \wt{E} \backslash \cC_\infty(\o) \; \mbox{(an open subset of $\wt{E}$)}.
\end{split}
\end{equation}
Since $\wt{\varphi}$ is continuous, for all $\o \in \wt{\Omega}$, 
\begin{equation}\label{2.1}
\wt{\varphi}_z = \sqrt{2u} \;\;\mbox{for $z \in \partial \cC_\infty(\o)$ (the boundary of $\cC_\infty(\o)$)}.
\end{equation}

\n
The next proposition contains the first step in the proof of Theorem \ref{theo2.3}, and mainly relies on a suitable application of the strong Markov property to certain compatible compact subsets of $\wt{E}$ approximating $\cC_\infty$. We refer to Remark \ref{rem2.2} for further comments about the interpretation of Proposition \ref{prop2.1}.

\begin{proposition}\label{prop2.1}
For $z,z' \in \wt{E}$ and $\o \in \wt{\Omega}$, define
\begin{align}
h_\infty(z,\o) & = \sqrt{2u} \;1\{z \in \cU_\infty(\o)\} + \wt{\varphi}_z \;1\{z \in \cC_\infty(\o)\}, \label{2.2}
\\[1ex]
g_\infty(z,z',\o) & = \wt{g}_{\cU_\infty(\o)}(z,z') \;\mbox{(with the notation above (\ref{1.2}))}. \label{2.3}
\end{align}

\n
Then, $h_\infty(z,\cdot)$ and $g_\infty(z,z', \cdot)$ are measurable functions on $\wt{\Omega}$ and for all $z_1,\dots,z_M \in \wt{E}$ and $a_1,\dots,a_M \in \IR$, one has
\begin{equation}\label{2.4}
\wt{\IE}^G [e^{i\,\sum\limits^M_{j=1} a_j \wt{\varphi}_{z_j}}] = \wt{\IE}^G [e^{i \,\sum\limits^M_{j=1} a_j h_\infty(z_j) - \frac{1}{2} \,\sum\limits^M_{j,\ell = 1} a_j a_\ell g_\infty(z_j, z_\ell)}]
\end{equation}

\n
(of course the left-hand side of (\ref{2.4}) coincides with $e^{-\frac{1}{2} \,\sum^M_{j,\ell = 1} a_j a_\ell \wt{g}(z_j,z_\ell)}$).
\end{proposition}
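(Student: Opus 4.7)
The plan is to apply the strong Markov property (\ref{1.18}) to a sequence of compatible random compact subsets $\cK_N$ approximating $\cC_\infty$ from above, and then pass to the limit in the resulting characteristic function identity; this approximation step is needed because $\cC_\infty$ is generally unbounded and a priori not itself compatible. Fix a base point $x_0 \in E$. For each $N \ge 1$, I would construct $\cK_N \subseteq \wt{E}$ as a compatible random compact subset with $\cK_N \supseteq \cC_\infty \cap B_N(x_0)$ and $\cK_N \downarrow \cC_\infty$ pointwise in $\o$. A natural route is to start from the closure $\cC^N$ of the union of those connected components of $\{z \in B^\circ_N(x_0) : \wt{\varphi}_z < \sqrt{2u}\}$ that meet $S_N(x_0)$: by continuity of $\wt{\varphi}$, this finite collection of components is detected by $\wt{\varphi}$ restricted to any open neighborhood of $\cC^N$, so (\ref{1.10}) holds, and the dyadic-shape thickening of Remark \ref{rem1.2}(2) then produces a compatible $\cK_N$ that is the closure of its interior. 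This construction also realizes $h_\infty$ and $g_\infty$ as pointwise limits of measurable quantities via (\ref{1.15})--(\ref{1.17}), giving the measurability claim.

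Setting $\cU_N = \wt{E} \backslash \cK_N$, the strong Markov property applied to $\cK_N$, followed by taking expectations, yields the analogue of (\ref{2.4}) with $h_{\cU_N}$, $\wt{g}_{\cU_N}$ in place of $h_\infty$, $g_\infty$. Monotone convergence (\ref{1.4}) gives $\wt{g}_{\cU_N}(z, z') \uparrow g_\infty(z, z')$ pointwise, and the integrand in that identity is bounded in modulus by $1$ since $\wt{g}_{\cU_N}$ is positive semidefinite. Dominated convergence therefore reduces the proof to the pointwise convergence $h_{\cU_N}(z) \to h_\infty(z)$, $\wt{\IP}^G$-a.s.

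This last convergence is the main obstacle. For $z \in \cC_\infty$ one has $z \in \cK_N$ for all $N$, so $h_{\cU_N}(z) = \wt{\varphi}_z = h_\infty(z)$ directly. For $z \in \cU_\infty$ the exit times $T_{\cU_N}$ are nondecreasing in $N$ and bounded above by $T_{\cU_\infty}$; continuity of $\wt{\varphi}$ together with (\ref{2.1}) ensures $\wt{\varphi}(X_{T_{\cU_N}}) \to \sqrt{2u}$ on $\{T_{\cU_\infty} < \infty\}$. It thus remains to establish $\wt{P}_z[T_{\cU_\infty} < \infty] = 1$ for $\wt{\IP}^G$-a.e.\ $\o$, equivalently $\wt{P}_z[H_{\cC_\infty} < \infty] = 1$. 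I would deduce this from Lemma \ref{lem1.4} via the coupling $\wt{\IQ}$: $\wt{\cI}^u$ has only unbounded components by (\ref{1.21}) and lies in $\{\wt{\gamma} < \sqrt{2u}\}$ by (\ref{1.32}), so $\wt{\cI}^u$ is contained in the analogue $\cC^\gamma_\infty$ of $\cC_\infty$ built from $\wt{\gamma}$; Lemma \ref{lem1.4} then gives $\wt{P}_z[H_{\cC^\gamma_\infty} < \infty] = 1$ $\wt{\IQ}$-a.s., and transferring via (\ref{1.28}) yields the desired statement for $\cC_\infty$ under $\wt{\IP}^G$. Bounded convergence then delivers (\ref{2.4}).
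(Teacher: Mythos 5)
Your overall architecture is the paper's: condition on compatible approximations of $\cC_\infty$ via the strong Markov property (\ref{1.18}), guarantee $\wt{P}_z[H_{\cC_\infty}<\infty]=1$ through Lemma \ref{lem1.4} and Lupu's coupling (this is exactly the paper's Lemma \ref{lem1.5}), and pass to the limit in the characteristic-function identity. But the approximation scheme you propose does not have the properties you use. Since $\cC_\infty$ is $\wt{\IP}^G$-a.s.\ unbounded (it contains unbounded components of $\{\wtv<\sqrt{2u}\}$), no sequence of \emph{compact} sets $\cK_N$ can satisfy $\cK_N\downarrow\cC_\infty$; moreover your $\cK_N$ (near-boundary components inside $B_N(x_0)$) are not even nested in $N$, and $\cK_N\supseteq\cC_\infty$ fails outside $B_N(x_0)$. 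Consequently the appeal to monotone convergence (\ref{1.4}) for $\wt{g}_{\cU_N}\uparrow g_\infty$ is unjustified (indeed $\cU_N=\wt{E}\backslash\cK_N$ is not contained in $\cU_\infty$ and does not increase to it), and the claims that $T_{\cU_N}$ is nondecreasing and bounded by $T_{\cU_\infty}$ are false for these sets.

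The second, more substantive omission is the boundary term. The first hitting point of $\cK_N$ need not be a point where $\wtv=\sqrt{2u}$: the components making up $\cK_N$ are chopped at the sphere $S_N(x_0)$, and on that part of $\partial\cK_N$ the field is an uncontrolled Gaussian value, typically below $\sqrt{2u}$. So $\wtv(X_{T_{\cU_N}})\to\sqrt{2u}$ fails on the event that the diffusion reaches $S_N(x_0)$ before hitting the level set inside the ball, and showing that this event's \emph{contribution to the expectation} vanishes is precisely the delicate step: one must combine the decay of its probability (which again uses the a.s.\ finiteness of $H_{\cC_\infty}$) with an integrability bound on the field at the random exit point; this is where assumption (\ref{1.33}) enters via Cauchy--Schwarz, and your proof never invokes it. The paper's remedy is to include the exterior in the conditioning: it works with the compact compatible sets $\cC_{N,L}$ containing the annulus $B_L\backslash B^\circ_N$, lets $L\to\infty$ so that $\cC_N\supseteq\wt{E}\backslash B^\circ_N$ and hence $\cU_N\subseteq B^\circ_N$ genuinely increases to $\cU_\infty$ (the pigeonhole argument for (\ref{2.10})), and then splits $h_{\cU_N}(z)=A_1+A_2$ as in (\ref{2.16}), killing the $S_N$-exit term $A_2$ by Cauchy--Schwarz and (\ref{1.33}) as in (\ref{2.20}). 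Without either this two-parameter construction or an explicit control of the $S_N$-exit contribution, your limiting argument has a genuine gap.
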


\begin{proof}
We consider some base point $x_0 \in E$, and in the notation of (\ref{1.1}) introduce
\begin{equation}\label{2.5}
B_N = B_N (x_0), \;B^\circ_N = B^\circ_N(x_0), \; S_N = S_N(x_0).
\end{equation}

\n
For $1 \le N < L$ integers, we define for $\o \in \wt{\Omega}$, 
\begin{align}
\cC_{N,L} = (B_L \backslash B^\circ_N) \cup \mbox{\small $\bigcup\limits_{x \in S_N}$} & \; \mbox{(the closure of the connected component} \label{2.6}
\\[-2ex]
&\;\;\;\mbox{of $\wt{\varphi} < \sqrt{2 u}$ in $B_N$ that contains $x$)}, \nonumber
\end{align}

\n
where the last set in parenthesis is understood as empty, when $\wt{\varphi}_x \ge \sqrt{2u}$,
\begin{align}
\cC_N =   \mbox{\small $\bigcup\limits_{L > N}$} \;\cC_{N,L} = (\wt{E} \backslash B^\circ_N) \cup  \mbox{\small $\bigcup\limits_{x \in S_N}$} \;(& \mbox{the closure of the connected component} \label{2.7}
\\[-2ex]
&\mbox{of $\wt{\varphi} < \sqrt{2 u}$ in $B_N$ that contains $x$)}. \nonumber
\end{align}
We also define
\begin{equation}\label{2.8}
\cU_{N,L} = \wt{E} \backslash \cC_{N,L} \; \mbox{and} \; \cU_N = \wt{E} \backslash \cC_N.
\end{equation}

\n
Note that $\cC_{N,L}$ is a compact subset of $\wt{E}$, with a finite number of connected components, that it is the closure of its interior, and that (see above (\ref{1.10}))
\begin{equation}\label{2.9}
\mbox{$\cC_{N,L}$ is compatible.}
\end{equation}

\n
Indeed, as we briefly explain $\{\cC_{N,L} \subseteq O\} \in \cA_O$, when $O \subseteq \wt{E}$ is open. We can assume that $O \supseteq B_L \backslash B^\circ_N$ (otherwise $\{\cC_{N,L} \subseteq O\}$ is empty), and consider (with hopefully obvious notation) the finitely many ``polygonal paths'' that start in $S_N$ and then remain in $B^\circ_N \cap O$ except for their final point that belongs to $\partial O \cap B^\circ_N$. Then, $\{\cC_{N,L} \subseteq O\}$ is realized exactly when for all such paths $\wt{\varphi}_z \ge \sqrt{2u}$ for some $z$ on the path strictly before the final point of the path. Hence, $\{\cC_{N,L} \subseteq O\}$ is $\sigma(\wt{\varphi}_z,z \in O) = \cA_O$-measurable, and (\ref{2.9}) follows.

\medskip
We further note that while $\cC_{N,L}$ increases to $\cC_N$, when $L > N$ goes to $\infty$, 
\begin{equation}\label{2.10}
\mbox{$\cC_N$ is a decreasing sequence and $\mbox{\small $\bigcap\limits_{N \ge 1}$} \,\cC_N = \cC_\infty$}.
\end{equation}

\n
Indeed, by direct inspection, $\cC_N$ is a decreasing sequence that contains $\cC_\infty$. Conversely, when $z$ belongs to $\bigcap_N \cC_N$, then for any $N$ one can find a ``polygonal path'' from $z$ to $S_N$ on which $\wt{\varphi} < \sqrt{2u}$, except maybe at its starting point where $\wt{\varphi}_z \le \sqrt{2u}$. By the drawer (or pigeonhole) principle one thus finds that $z \in \cC_\infty$, and (\ref{2.10}) follows.

\medskip
We now turn to the measurability statement below (\ref{2.3}). By (\ref{1.12}) ii) applied to $\cC_{N,L}$, together with (\ref{2.10}) and the fact that $\cC_{N,L}$ increases to $\cC_N$ as $L \r \infty$, we see that for any $z \in \wt{E}$, $h_\infty(z,\cdot)$ is $\cA$-measurable. Moreover, since $\cC_{N,L}$ is compatible, by (\ref{1.17}), for any $z,z' \in \wt{E}$, $\wt{g}_{\cU_{N,L}}(z,z')$ is $\cA$-measurable. Further, when $z, z' \in B^\circ_N$, then the connected components of $z$ (resp.~of $z'$) in $\cU_{N,L}$ and $\cU_N$ coincide so that
\begin{equation}\label{2.11}
\mbox{$\wt{g}_{\cU_{N,L}} (z,z') = \wt{g}_{\cU_N}(z,z')$, when $z,z' \in B^\circ_N$, $1 \le N < L$}.
\end{equation}

\n
By (\ref{2.10}) and the monotone convergence (\ref{1.4}), we see that for $z,z' \in B^\circ_N$
\begin{equation}\label{2.12}
g_\infty(z,z') = \wt{g}_{\cU_\infty}(z,z') = \lim\limits_N \uparrow \wt{g}_{\cU_N}(z,z'),
\end{equation}
and in particular $g_\infty (z,z',\cdot)$ is $\cA$-measurable.

\medskip
We will now prove (\ref{2.4}). We assume $N$ so large that $z_1,\dots,z_M \in B^\circ_N$. The observation above (\ref{2.11}) now yields that for any $z \in B^\circ_N$,
\begin{equation}\label{2.13}
h_{\cU_{N,L}}(z) = \wt{E}_z[\wt{\varphi}(X_{T_{\cU_{N,L}}})] = \wt{E}_z[\wt{\varphi}(X_{T_{\cU_N}})] = h_{\cU_N}(z).
\end{equation}
The strong Markov property (\ref{1.18}) applied to $\cC_{N,L}$ then implies that
\begin{equation}\label{2.14}
\begin{split}
\wt{\IE}^G[e^{i \sum\limits^M_{j=1} a_j \wt{\varphi}_{z_j}}] & = \wt{\IE}^G[e^{i \sum\limits^M_{j=1} a_j  h_{\cU_{N,L}(z_j)} - \frac{1}{2} \, \sum\limits^M_{j,\ell = 1} a_j a_\ell \,\wt{g}_{\cU_{N,L}}(z_j,z_\ell)}]
\\
& = \wt{\IE}^G[e^{i \sum\limits^M_{j=1} a_j  h_{\cU_{N}(z_j)} - \frac{1}{2} \, \sum\limits^M_{j,\ell} a_j a_\ell \,\wt{g}_{\cU_{N}}(z_j,z_\ell)}],
\end{split}
\end{equation}
using (\ref{2.11}), (\ref{2.13}) in the last step.

\medskip
We already know that $\wt{g}_{\cU_N}(z_j,z_\ell)$ increases to $g_\infty (z_j,z_\ell)$, for $N \r \infty$. We also show
\begin{equation}\label{2.15}
\mbox{$\lim\limits_N h_{\cU_N} (z_j) = h_\infty(z_j)$ in $\wt{\IP}^G$-probability, for $1 \le j \le M$}.
\end{equation}

\n
Letting $N$ tend to infinity in the last line of (\ref{2.14}) will yield (\ref{2.4}), and conclude the proof of Proposition \ref{prop2.1}. We now prove (\ref{2.15}).

\medskip
We first observe that $T_{\cU_N}$ is a jointly measurable function of $\o$ and $X$. Indeed, it vanishes if $X_0 \notin B^\circ_N$, and if $X_0 \in B^\circ_N$, then $T_{\cU_N} = T_{\cU_{N,L}}$ is the non-decreasing limit of the $T_{\cU^n_{N,L}}$, where $\cU^n_{N,L} = \wt{E} \backslash \cC^n_{N,L}$ in the notation of (\ref{1.14}), and the $T_{\cU^n_{N,L}}$ are jointly measurable. With this observation, we will then be able to apply Fubini's theorem in the calculations below. The same observation applies to $T_{\cU_\infty} = \lim_n \uparrow T_{\cU_N}$, cf.~(\ref{2.10}).

\medskip
For $z \in B^\circ_N$, we write
\begin{equation}\label{2.16}
\begin{split}
h_{\cU_N}(z) =   \mbox{$A_1 + A_2$, where $A_1$}  =& \;\wt{E}_z[\wt{\varphi}(X_{T_{\cU_N}}), \;T_{\cU_N} < T_{B^\circ_N}] \;\mbox{and}
\\
 A_2 = &\; \wt{E}_z [\wt{\varphi}(X_{T_{B^\circ_N}}), \; T_{\cU_N} = T_{B^\circ_N}].
\end{split}
\end{equation}

\n
We first consider $A_1$. Note that when $z \in B^\circ_N \backslash \cC_N$, then $\wt{P}_z$-a.s. on $\{T_{\cU_N} < T_{B^\circ_N}\}, \;X_{T_{\cU_N}} \in \partial \cC_N \cap B^\circ_N$ and hence $\varphi(X_{T_{\cU_N}}) = \sqrt{2u}$, see (\ref{2.7}). As a result, for $z \in B^\circ_N$
\begin{equation}\label{2.17}
A_1 = \wt{\varphi}_z \,1\{z \in \cC_N\} + \sqrt{2u} \;1 \{z \notin \cC_N\} \;\wt{P}_z [H_{\cC_N} < T_{B^\circ_N}].
\end{equation}

We also have
\begin{lemma}\label{lem1.5}
\begin{equation}\label{1.42}
\mbox{$\wt{\IP}^G$-a.s., for all $z \in \wt{E}$, $\wt{P}_z [H_{\cC_\infty} < \infty] = 1$}.
\end{equation}
\end{lemma}

\begin{proof} Note that $H_{\cC_\infty} = T_{\cU_\infty}$ is jointly measurable in $\o$ and $X$, see above (\ref{2.16}), and the probability in (\ref{1.42}) is a measurable function of $\o$, which is continuous in $z$ (all points of $\cC_\infty$ are regular for $X$). By (\ref{1.32}) we have $\wt{\cI}^u \subseteq \{\wt{\gamma} < \sqrt{2u}\}$ where $\wt{\gamma}$ is distributed as $\wt{\varphi}$ under $\wt{\IP}^G$, and $\wt{\cI}^u = \{x \in \wt{E}; \wt{\ell}_{z,u} > 0\}$ only has unbounded components and $\wt{\ell}$ has same law as under $\wt{\IP}^I$. The claim (\ref{1.42}) now follows from (\ref{1.34}). 
\end{proof}

By (\ref{1.42}) we know that on a set of full $\wt{\IP}^G$-measure
\begin{equation}\label{2.18}
\wt{P}_z [T_{\cU_N} < T_{B^\circ_N}] \stackrel{(\ref{2.10})}{\ge} \wt{P}_z [T_{\cU_\infty} < T_{B^\circ_N}] = \wt{P}_z[H_{\cC_\infty} < T_{B^\circ_N}] \underset{N}{\stackrel{(\ref{1.42})}{\longrightarrow}} 1.
\end{equation}
Thus, by (\ref{2.10}) and (\ref{2.17}), we find that
\begin{equation}\label{2.19}
\mbox{$\wt{\IP}^G$-a.s.,} \;A_1 \underset{N}{\longrightarrow} \wt{\varphi}_z \,1 \{z \in \cC_\infty\} + \sqrt{2u} \; 1\{z \notin \cC_\infty\} = h_\infty(z).
\end{equation}
We now turn to $A_2$. By Cauchy-Schwarz's inequality we have
\begin{equation}\label{2.20}
\begin{split}
\wt{\IE}^G[|A_2|] & \le \wt{\IE}^G \wt{E}_z [\wtv^2(X_{T_{B^\circ_N}})]^{\frac{1}{2}} \,\wt{\IP}^G \! \otimes \wt{P}_z [T_{\cU_N} = T_{B^\circ_N}]^{\frac{1}{2}}
\\
&\!\!\! \stackrel{(\ref{1.33})}{\le} \sqrt{g_0} \,\wt{\IP}^G \! \otimes \wt{P}_z [T_{\cU_N} = T_{B^\circ_N}]^{\frac{1}{2}} \underset{N}{\stackrel{(\ref{2.18})}{\longrightarrow}} 0.
\end{split}
\end{equation}

\n
Combining (\ref{2.19}), (\ref{2.20}), we have proved (\ref{2.15}) and Proposition \ref{prop2.1} follows.
\end{proof}

\begin{remark}\label{rem2.2}  \rm ~

\medskip\n
1) As we now explain, a certain strong Markov property, see (\ref{2.21}) below, underlies Proposition \ref{prop2.1}. Observe that $\cC_{N,L}$ is non-decreasing in $L$, see (\ref{2.7}), and hence, by (\ref{1.13}), the $\cA^+_{\cC_{N,L}}$ are non-decreasing in $L > N$. We set $\cH_N = \sigma(\bigcup_{L > N} \cA^+_{\cC_{N,L}})$. Then, for $z \in B^\circ_N$, it follows from (\ref{2.13}) and the martingale convergence theorem that
\begin{equation*}
h_{\cU_N}(z) = h_{\cU_{N,L}}(z) \stackrel{(\ref{1.18})}{=} \wtie^G [\wtv_z | \cA^+_{\cC_{N,L}}] \underset{L}{\longrightarrow} \wtie^G[\wtv_z | \cH_N],
\end{equation*}

\n
and the equality $h_{\cU_N}(z) = \wtie^G [\wtv_z | \cH_N]$ extends to all $z \in \wte$, since both members coincide with $\wtv_z$, when $z \notin B^\circ_N$ (see (\ref{1.12}) iii).

\medskip
On the other hand, for fixed $L$, $\cC_{N,L}$ is non-increasing in $N (< L)$, see (\ref{2.6}), and therefore, by (\ref{1.13}), the $\sigma$-algebras $\cA^+_{\cC_{N,L}}$ decrease in $N( < L)$. It then follows that $\cH_N$ decreases with $N$ so that by the martingale convergence theorem for reverse martingales, for any $z \in \wte$, $h_{\cU_N}(z) \, \underset{\rm a.s.}{\stackrel{L^2(\wt{\IP}^G)}{\longrightarrow}} \,\wtie^G[\wtv_z | \cH_\infty]$, where $\cH_\infty = \bigcap_N \cH_N$. We have used the assumptions (\ref{1.31}) and  (\ref{1.33}) in the proof of (\ref{2.15}) to identify $\wtie^G[\wtv_z | \cH_\infty]$ with $h_\infty(z)$ from (\ref{2.2}). Moreover, by (\ref{2.12}) and the argument above (\ref{2.12}), one sees that $g_\infty(z,z')$ is $\cH_\infty$-measurable. In this light the proof of Proposition \ref{prop2.1} can be modified to show that (compare with (\ref{1.18}))
\begin{align}
&\mbox{under $\wt{\IP}^G$, conditionally on $\cH_\infty$, $(\wtv_z)_{z \in \wte}$ is a Gaussian field with}\label{2.21}
\\
&\mbox{mean $(h_\infty(z))_{z \in \wte}$ and covariance $g_\infty(\cdot,\cdot)$.} \nonumber
\end{align}

\medskip\n
We will actually not need (\ref{2.21}) in what follows, but this statement provides an additional interpretation to what Proposition \ref{prop2.1} does. 

\bigskip\n
2) Note that Proposition \ref{prop2.1} holds as well if one replaces (\ref{1.33}) with the weaker  condition (\ref{1.40a}) (or equivalently (\ref{1.40b})). In essence, one simply replaces in the proof $B^\circ_N$ by $D^\circ_N$, $B_N$ by $D_N$ and $S_N$ by $\Delta_N$. One knows that Lemma \ref{lem1.4} holds (see Remark \ref{rem1.5}), and (\ref{2.20}) works as well in this new set-up. \hfill $\square$
\end{remark}

We will now construct the coupling announced at the beginning of this section. We consider the product space $\wt{\Omega} \times \wt{W}$ endowed with the product $\sigma$-algebra $\cA \otimes \cB$, and the product-measure $\wt{\IP}^G \otimes \wt{\IP}^I$. On this space we have the independent Gaussian free field $(\wtv_z)_{z \in \wte}$ and the field of local times $(\wt{\ell}_{z,u})_{z \in \wte}$ of random interlacements at level $u$, such that (\ref{1.21}), (\ref{1.22}) hold. By (\ref{1.21}) and the local finiteness of $E$,
\begin{equation}\label{2.22}
\mbox{$\partial \wt{\cI}^u$ is a locally finite subset of $\wte$ (see (\ref{1.21}) for notation).}
\end{equation}
We introduce the open subset of $\wte$
\begin{equation}\label{2.23}
\begin{split}
\cJ &\!\! = \mbox{the union of $\wt{\cI}^u$ and all connected components of $\{|\wtv| > 0\}$ intersecting $\partial \wt{\cI}^u$}
\\
& \!\! = \mbox{the union of the connected components of $\{2 \wt{\ell}_{\point,u} + \wtv^2_\point > 0\}$ intersecting $\wt{\cI}^u$}.
\end{split}
\end{equation}
By a similar argument as in (\ref{2.22})
\begin{equation}\label{2.24}
\mbox{$\partial \cJ$ is a locally finite subset of $\wte$},
\end{equation}
and we define the closed set
\begin{equation}\label{2.25}
\cC'_\infty = \cJ \cup \partial \cJ.
\end{equation}
Note that by the second line of (\ref{2.23}),
\begin{equation}\label{2.26}
\mbox{$\wt{\ell}_{z,u} = 0$ and $\wtv_z = 0$, for all $z \in \partial \cJ$}.
\end{equation}
We then define the random field
\begin{equation}\label{2.27}
\wt{\eta}_z = \big(\sqrt{2u} - \mbox{\f $\sqrt{2 \wt{\ell}_{z,u} + \wtv^2_z}$}\big) \,1\{z \in \cC'_\infty\} + (\wtv_z + \sqrt{2u}) \, 1\{z \notin \cC'_\infty\}
\end{equation}

\medskip\n
and note that by (\ref{2.26}) the expression above remains unchanged if $\cC'_\infty$ is replaced by $\cJ$.

\medskip
The main result of this section, i.e. Theorem \ref{theo2.3} below, states that $\wt{\eta}$ is a Gaussian free field on $\wte$. Unlike $\wt{\gamma}$ in (\ref{1.26}) - (\ref{1.29}), $\wt{\eta}$ is solely defined in terms of $\wt{\ell}_{\point, u}$ and $\wtv$. Actually, by (\ref{1.29}) and (\ref{1.32}), $\wt{\eta}$ amounts to a suitable resampling of the signs of $\wt{\gamma} - \sqrt{2u}$ outside $\cC'_\infty = \cJ \cup \partial \cJ$, which a.s. coincides with the closure of the union of the unbounded components of $\{\wt{\gamma} < \sqrt{2u}\}$, see (\ref{2.45}), (\ref{2.46}) below.  The proof of Theorem \ref{theo2.3} will mainly rely on the application of the strong Markov property of $\wtv$ and Proposition \ref{prop2.1}. We recall (\ref{1.3}) for notation.

\begin{theorem}\label{theo2.3}
\begin{align}
& \mbox{$z \rightarrow \wt{\eta}_z$ is a continuous function on $\wte$ (which equals $\sqrt{2u}$ on $\partial \cJ$)}. \label{2.28}
\\[1ex]
& \mbox{The law of $\wt{\eta}$ on the canonical space $\wt{\Omega}$ is $\wt{\IP}^G$}. \label{2.29}
\end{align}
\end{theorem}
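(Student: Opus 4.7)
The continuity (2.28) is immediate on the open sets $\cJ$ and $\wte \setminus \cC'_\infty$, where $\wt{\eta}$ is a composition of the continuous fields $\wtv$ and $\wt{\ell}_{\cdot, u}$. At any $z_0 \in \partial \cJ$, (2.26) gives $\wtv_{z_0} = \wt{\ell}_{z_0, u} = 0$, so both branches of the definition (2.27) yield $\sqrt{2u}$; continuity at $z_0$ then follows by patching.

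For (2.29), my strategy is to match the characteristic function of $\wt{\eta}$ under $\wt{\IP}^G \otimes \wt{\IP}^I$ with that of $\wt{\gamma}$, which has law $\wt{\IP}^G$ by (1.28). The first step is to show that almost surely $\cC'_\infty = \cC_\infty^{\wt{\gamma}}$, where $\cC_\infty^{\wt{\gamma}}$ denotes the analogue of (1.41) for $\wt{\gamma}$. For $\cC'_\infty \subseteq \cC_\infty^{\wt{\gamma}}$: on each connected component of $\{\wt{\gamma} \ne \sqrt{2u}\} = \{\wtv^2 + 2\wt{\ell}_{\cdot,u} > 0\}$, the sign of $\wt{\gamma} - \sqrt{2u}$ is constant and, by (1.32), equals $-1$ on any such component meeting $\wt{\cI}^u$; since $\wt{\cI}^u$ has only unbounded components by (1.21), these components lie in unbounded components of $\{\wt{\gamma} < \sqrt{2u}\}$, whence $\cJ \subseteq \cC_\infty^{\wt{\gamma}}$. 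For the reverse inclusion, an unbounded component $C$ of $\{\wt{\gamma} < \sqrt{2u}\}$ disjoint from $\wt{\cI}^u$ satisfies $\wt{\ell} \equiv 0$ on $C$, hence by (1.29) lies in $\{\wtv \ne 0\}$; but (1.31) together with the symmetry $-\wtv \sim \wtv$ forces $\{\wtv \ne 0\}$ to have only bounded components, a contradiction.

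With this identification, $\wt{\eta}_z = \wt{\gamma}_z$ on $\cC'_\infty$ (on $\cJ$, via (1.29) and $\wt{\gamma} < \sqrt{2u}$; on $\partial \cJ$, both equal $\sqrt{2u}$). Writing $h_\infty^{\wt{\gamma}}$, $g_\infty^{\wt{\gamma}}$ for the analogues of (2.2)--(2.3) with $\wt{\gamma}$ in place of $\wtv$, and using $h_\infty^{\wt{\gamma}} \equiv \sqrt{2u}$ on $\cU_\infty^{\wt{\gamma}}$, one obtains the decomposition
\begin{equation*}
\wt{\eta}_z = h_\infty^{\wt{\gamma}}(z) + \wtv_z \cdot 1\{z \in \cU_\infty^{\wt{\gamma}}\}.
\end{equation*}
Combined with Proposition 2.1 applied to $\wt{\gamma}$, to establish (2.29) it then suffices to show
\begin{equation*}
\wt{\IE}^G \otimes \wt{\IE}^I \Big[e^{i \sum_j a_j \wt{\eta}_{z_j}}\Big] = \wt{\IE}^G \Big[e^{i \sum_j a_j h_\infty^{\wt{\gamma}}(z_j) - \tfrac{1}{2} \sum_{j,\ell} a_j a_\ell \, g_\infty^{\wt{\gamma}}(z_j, z_\ell)}\Big].
\end{equation*}

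The main obstacle is this last display. One applies the strong Markov property of $\wtv$ relative to $\cC_\infty^{\wt{\gamma}}$, approximated from above by compact sets $\cC_{N,L}^{\wt{\gamma}}$ defined analogously to (2.6)--(2.7), and passes to the limit along the lines of the proof of Proposition 2.1. The key input is (2.26): $\wtv$ vanishes on $\partial \cJ = \partial \cC_\infty^{\wt{\gamma}}$, so the harmonic extension of $\wtv|_{\cC_\infty^{\wt{\gamma}}}$ into $\cU_\infty^{\wt{\gamma}}$ is identically zero, and conditionally on the boundary data $\wtv|_{\cU_\infty^{\wt{\gamma}}}$ is centered Gaussian with covariance $g_\infty^{\wt{\gamma}}$. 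The technical subtlety is that $\cC_\infty^{\wt{\gamma}}$ depends on both $\wtv$ and $\wt{\ell}$; this is handled by the independence of $\wtv$ and $\wt{\ell}$ under $\wt{\IP}^G \otimes \wt{\IP}^I$, so that conditioning on $\wt{\ell}$ turns $\cC_{N,L}^{\wt{\gamma}}$ into a compatible random compact set for $\wtv$ alone, enabling the analogue of (2.14)--(2.15).
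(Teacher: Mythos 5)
Your proposal is correct and follows essentially the same route as the paper: freezing $\wt{\ell}$ and using independence so that the approximating sets are compatible compacts for $\wtv$ alone, applying the strong Markov property and passing to the limit as in Proposition \ref{prop2.1} (with $\wtv=0$ on $\partial\cJ$ killing the harmonic part), identifying $\cC'_\infty=\cC^{\wt{\gamma}}_\infty$ a.s., and matching characteristic functions via Proposition \ref{prop2.1} applied to $\wt{\gamma}$ — this is exactly the paper's argument (\ref{2.31})--(\ref{2.46}). Only a minor notational point: in your key display the right-hand side should be an expectation over both $\wtv$ and $\wt{\ell}$ (i.e.\ under $\wt{\IP}^G\otimes\wt{\IP}^I$), since $h^{\wt{\gamma}}_\infty$ and $g^{\wt{\gamma}}_\infty$ depend on $\wt{\ell}$ as well.
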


\begin{proof}
Let us prove (\ref{2.28}). We only need to check the continuity of $\wt{\eta}$ at all $z \in \partial \cJ$, which is locally finite (see (\ref{2.24})), and that $\wt{\eta} = \sqrt{2u}$. But the expressions between parenthesis in the right-hand side of (\ref{2.27}) are continuous and take value $\sqrt{2u}$ on $\partial \cJ$.  The claim (\ref{2.28}) follows. We then turn to (\ref{2.29}). It suffices to show that for $z_1,\dots,z_M \in \wte$
\begin{equation}\label{2.30}
\begin{array}{l}
\mbox{$(\wt{\eta}_{z_1},\dots,\wt{\eta}_{z_M})$ is a centered Gaussian vector with covariance} 
\\[0.5ex]
\mbox{matrix $\wt{g}(z_i,z_j), 1 \le i, j \le M$}.
\end{array}
\end{equation}

\n
For this purpose we will derive a formula for $\wt{\eta}$ similar to (\ref{2.4}) with $\cC'_\infty$ and $\cU'_\infty = \wte \backslash \cC'_\infty$ in place of $\cC_\infty$ and $\cU_\infty$, cf.~(\ref{2.43}). We first approximate $\cC'_\infty$. For $1 \le N < L$ integers, we define (see (\ref{2.5}) for notation)
\begin{align}
\cC'_{N,L} = &\; (B_L \backslash B^\circ_N) \cup \big(B_L \cap (\wt{\cI}^u \cup \partial \wt{\cI}^u)\big) \cup \mbox{\small $\bigcup_{z \in \partial \wt{\cI}^u \cap B^\circ_L}$}  \label{2.31}
\\
&\; \mbox{(the closure of the connected component of $\{| \wtv | > 0\} \cap B^\circ_L$ containing $z$)}\nonumber
\intertext{and the last set in parenthesis is understood as empty if $\wtv_z = 0$, as well as}
\cC'_{N} = &\;(\wte \backslash B^\circ_N) \cup (\wt{\cI}^u \cap \partial \wt{\cI}^u)   \cup \mbox{\small $\bigcup_{z \in \partial \wt{\cI}^u}$} \label{2.32}
\\
&\; \mbox{(the closure of the connected component of $\{| \wtv | > 0\}$ containing $z$)}. \nonumber
\end{align}

\n
Observe that the $\cC'_{N,L}$ are compact and non-decreasing in $L$. In fact, in (\ref{2.31}) one can replace in the last union (over $z \in \partial \wt{\cI}^u \cap B^\circ_L$) all the terms where the set in parenthesis is not contained in $B_N$ by a union of the closures of the connected components of $\{|\wtv| > 0\} \cap B^\circ_L$ containing $x$, for all $x \in S_N$ such that the connected component of $\{|\wtv | > 0\}  \cap B^\circ_L$ containing $x$ meets $\partial \wt{\cI}^u \cap B^\circ_L$. One can also make a similar replacement for (\ref{2.32}) and replace in the last union all the terms where the set in parenthesis is not contained in $B_N$ by a union of the closures of the connected components of $\{|\wtv |> 0\}$ containing $x \in S_N$ such that the connected component of $\{|\wtv | > 0\}$ containing $x$ meets $\partial \wt{\cI}^u$. From this observation one sees that	
\begin{align}
&\mbox{$\cC'_{N,L} \cap B_N = \cC'_N \cap B_N$, for large $L$, and} \label{2.33}
\\[1ex]
& \mbox{$\cC'_{N,L}$ increases with $L > N$ and \mbox{\small $\bigcup_{L > N}$}  \,$\cC'_{N,L} = \cC'_N$}. \label{2.34}
\end{align}																														\n\n
Moreover, the $\cC'_N$ are closed (for instance by (\ref{2.33})), decrease with $N$ and
\begin{equation}\label{2.35}
\mbox{\small $\bigcap\limits_{N \ge 1}$} \, \cC'_N = \cC'_\infty
\end{equation}														(simply note that $\cJ \subseteq \bigcap_N \cC'_N \subseteq \cJ \cup \partial \cJ = \cC'_\infty$).

\medskip
An important additional observation is that for $w \in \wt{W}$ (i.e. ``freezing'' the interlacement and in particular $\wt{\cI}^u$), as a function of $\o \in \wt{\Omega}$,	\begin{equation}\label{2.36}
\mbox{$\cC'_{N,L}$ is a compatible compact subset of $\wte$}
\end{equation}		

\n
(note that when $w$ is fixed, $\partial \wt{\cI}^u \cap B^\circ_L$ is a finite deterministic set, and we can use a similar argument as below (\ref{2.9})). One also has the fact that for $z,z' \in \wt{E}$, $1\{z \in \cC'_\infty\}$ and $\wt{g}_{\cU'_\infty}(z,z')$ (with $\cU'_\infty$ as in (\ref{2.37})) are jointly measurable in  $\o$ and $w$.

\medskip
We will apply the strong Markov property under $\wt{\IP}^G$ and define the open sets
\begin{equation}\label{2.37}
\cU'_{N,L} = \wte \backslash \cC'_{N,L}, \;\cU'_N = \wte \backslash \cC'_N, \;\mbox{and} \; \cU'_\infty = \wte \backslash \cC'_\infty,
\end{equation}	
and for any $J \subseteq \{1,\dots,M\}$ the events
\begin{equation}\label{2.38}
\mbox{$A^J_{N,L}= \{z_\ell \in \cU'_{N,L}$, when $\ell \in J$, and $z_\ell \notin \cU'_{N,L}$, when $\ell \notin J\}$}
\end{equation}		

\n
as well as $A^J_N$ and $A^J_\infty$, with $\cU'_N$ and $\cU'_\infty$ respectively in place of $\cU'_{N,L}$ in (\ref{2.38}).

\medskip
Note that $A^J_{N,L} \in \cA^+_{\cC'_{N,L}}$ by (\ref{1.12}), and by the strong Markov property (\ref{1.18}), for all $J \subseteq \{1,\dots,M\}$ and $a_1,\dots,a_M \in \IR$ one has
\begin{equation}\label{2.39}
\begin{array}{l}
\wtie^G[e^{i (\sum\limits_{j\in J} a_j (\sqrt{2 u} + \wtv_{z_j}) + \sum\limits_{j \notin J} a_j(\sqrt{2 u} - \sqrt{2 \wt{\ell}_{z_j,u} + \wtv^2_{z_j}}\,))}, A^J_{N,L} ] = 
\\[2ex]
\wtie^G[e^{i (\sum\limits_{j\in J} a_j (\sqrt{2 u} + h_{\cU'_{N,L}} (z_j)) + \sum\limits_{j \notin J} a_j(\sqrt{2 u} - \sqrt{2 \wt{\ell}_{z_j,u} + \wtv^2_{z_j}}\,)) - \frac{1}{2} \,\sum\limits_{j,\ell \in J} a_j a_\ell \wt{g}_{\cU'_{N,L}}(z_j,z_\ell) }, A^J_{N,L} ] .
\end{array}
\end{equation}

\n
If $N$ is so large that $z_1,\dots, z_M \in B^\circ_N$, then by (\ref{2.33}), letting $L$ tend to $\infty$, we can replace in (\ref{2.39}) $A^J_{N,L}$ with $A^J_N$, $h_{\cU'_{N,L}}(z_j)$ with $h_{\cU'_N} (z_j)$, and $\wt{g}_{\cU'_{N,L}}(z_j,z_\ell)$ with $\wt{g}_{\cU'_N} (z_j,z_\ell)$. Moreover, with similar arguments as for the proof of (\ref{2.15}), with now (\ref{1.34}) in place of (\ref{1.42}), we find that for $\wt{\IP}^I$-a.e. $w \in \wt{W}$, and $1 \le j, \ell \le M$
\begin{align}
&h_{\cU'_N}(z_j) \stackrel{\wt{\IP}^G \,{\rm-prob.}}{\longrightarrow} 0 \cdot 1\{z_j \in \cU'_\infty\}+ \wtv_{z_j} \,1\{z_j \in \cC'_\infty\} \label{2.40}
\\[2ex]
&\wt{g}_{\cU'_N}(z_j,z_\ell) \underset{N}{\longrightarrow} \wt{g}_{\cU'_\infty}(z_j,z_\ell) \quad \mbox{(by (\ref{1.4}) and (\ref{2.35})).} \label{2.41}
\end{align}

\n
Letting $N$ tend to infinity, we find with (\ref{2.35}) that for $J \subseteq \{1,\dots, M\}$ and $\wt{\IP}^I$-a.e. $w$,
\begin{equation}\label{2.42}
\begin{array}{l}
\wtie^G[e^{i (\sum\limits_{j\in J} a_j (\sqrt{2 u} + \wtv_{z_j}) + \sum\limits_{j \notin J} a_j(\sqrt{2 u} - \sqrt{2 \wt{\ell}_{z_j,u} + \wtv^2_{z_j}}\,))}, A^J_\infty] = 
\\[2ex]
\wtie^G[e^{i (\sum\limits_{j\in J} a_j \sqrt{2 u} + \sum\limits_{j \notin J} (\sqrt{2 u} - \sqrt{2 \wt{\ell}_{z_j,u} + \wtv^2_{z_j}}\,)) - \frac{1}{2} \,\sum\limits_{j,\ell \in J}^M a_j a_\ell \wt{g}_{\cU'_\infty}(z_j,z_\ell) }, A^J_\infty] .
\end{array}
\end{equation}

\medskip\n
Summing over $J$, with the definition of $\wt{\eta}$ in (\ref{2.27}), we find that for $\wt{\IP}^I$-a.s. $w \in \wt{W}$,
\begin{equation}\label{2.43}
\wtie^G[e^{i \sum\limits^M_{j=1} a_j \wt{\eta}_{z_j}}] =   \wtie^G[e^{i \sum\limits^M_{j=1} a_j h'_\infty(z_j) - \frac{1}{2} \,\sum\limits^M_{j,\ell = 1} a_j a_\ell \,\wt{g}_{\cU'_\infty}(z_j, z_\ell)}],
\end{equation}
where for $z \in \wte$ we have set
\begin{equation}\label{2.44}
\begin{split}
h'_\infty(z) & = \sqrt{2u} \;1\{z \in \cU'_\infty\} + \wt{\eta}_z \;1\{z \in \cC'_\infty\}
\\
& = \sqrt{2u} \;1\{z \in \cU'_\infty\} +(\sqrt{2u} - \mbox{\f $\sqrt{2 \wt{\ell}_{z,u} + \wtv^2_z}$}\,) \;1\{z \in \cC'_\infty\}.
\end{split}
\end{equation}

\n
On the probability space $(\Sigma, \cF, \wt{\IQ})$, see above (\ref{1.26}), extension of the product space $\wt{\Omega} \times \wt{W}$, and endowed with the Gaussian free field $\wt{\gamma}$, cf.~(\ref{1.26}) - (\ref{1.29}), we set 
\begin{equation}\label{2.45}
\begin{split}
\cC^{\wt{\gamma}}_\infty = & \;\mbox{the closure of the union of unbounded connected components}
\\
&\; \mbox{of $\{\wt{\gamma} < \sqrt{2u}\}$.}
\end{split}
\end{equation}
As we now explain

\vspace{-3ex}
\begin{equation}\label{2.46}
\mbox{$\wt{\IQ}$-a.s., $\cC'_\infty = \cC^{\wt{\gamma}}_\infty$}.
\end{equation}

\medskip\n
First note that a.s. $\cC'_\infty \subseteq \cC^{\wt{\gamma}}_\infty$. Indeed, a.s. $\cJ \subseteq \cC_\infty^{\wt{\gamma}}$ because each component of $\cJ$ contains a component of $\wt{\cI}^u$, by (\ref{2.23}), which is unbounded by (\ref{1.21}), and hence by (\ref{1.29}), (\ref{1.32}), a.s., $\cC'_\infty = \cJ \cup \partial \cJ \subseteq \cC^{\wt{\gamma}}_\infty$. For the converse inclusion, note that any component of $\{\wt{\gamma} < \sqrt{2u}\}$ not intersecting $\cC'_\infty = \cJ \cup \partial \cJ$ lies in $\wt{E} \backslash \wt{\cI}^u$ and hence by (\ref{1.29}) is a.s.~contained in a connected component of $\{|\wtv | > 0\}$. By (\ref{1.31}) (applied to $\wtv$ and $- \wtv$), a.s.~all such components are bounded. Thus, a.s.~all unbounded components of $\{\wt{\gamma} < \sqrt{2u}\}$ intersect $\cC'_\infty$ and hence $\cJ$, so that by (\ref{1.29}), (\ref{2.23}), a.s. $\cC^{\wt{\gamma}}_\infty \subseteq \cC'_\infty$. The claim (\ref{2.46}) follows. 

\medskip
By (\ref{1.29}), (\ref{1.32}), we see that $\wt{\IQ}$-a.s, for $z \in \cC'_\infty$, $\wt{\gamma}_z = \sqrt{2u} - \mbox{\f $\sqrt{2 \wt{\ell}_{z,u} + \wtv^2_z}$}$, so with (\ref{2.44}) and (\ref{2.46}) we find that for $z,z' \in \wte$, $\wt{\IQ}$-a.s. (setting $\cU^{\wt{\gamma}}_\infty = \wte \backslash \cC^{\wt{\gamma}}_\infty$) 
\begin{equation*}
h'_\infty(z) = \sqrt{2u} \; 1\{z \in \cU^{\wt{\gamma}}_\infty\} + \wt{\gamma}_z \,1\{z \in \cC_\infty^{\wt{\gamma}}\} \;\mbox{and} \;  \wt{g}_{\cU'_\infty}(z,z') = \wt{g}_{\cU_\infty^{\wt{\gamma}}}(z,z').
\end{equation*}

\medskip\n
We can thus apply (\ref{2.4}) of Proposition \ref{prop2.1} to $\wt{\gamma}$, and after integration of (\ref{2.43}) with respect to $\wt{\IP}^I$ conclude that the Fourier transforms of $(\wt{\eta}_{z_1},\dots, \wt{\eta}_{z_M})$ and $(\wt{\gamma}_{z_1},\dots , \wt{\gamma}_{z_M})$ coincide. This shows (\ref{2.30}) and concludes the proof of Theorem \ref{theo2.3}.
\end{proof}

\medskip
By considering restrictions of $\wtv, \wt{\cI}^u$ (or $\wt{\ell}_{\point,u}$), $\wt{\eta}$ to $E$, we can for instance obtain the reinforcement (\ref{0.8}) of (\ref{0.7}). Namely, one has
\begin{corollary}\label{cor2.4}
One can couple independent copies $(\varphi_x)_{x \in E}$ and $\cI^u$ of the Gaussian free field on $E$ and random interlacements at level $u$ on $E$, with $(\eta_x)_{x \in E}$ a Gaussian free field on $E$, so that with $\cV^u = E \backslash \cI^u$ the vacant set at level $u$
\begin{equation}\label{2.47}
\mbox{for all $A \subseteq (0,\infty)$, $\{x \in E; \eta_x \in \sqrt{2u} + A\} \subseteq \{ x \in E; \varphi_x \in A\} \cap \cV^u$}.
\end{equation}
\end{corollary}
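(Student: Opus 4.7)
The plan is to obtain the corollary directly by restricting the coupling from Theorem \ref{theo2.3} to the discrete vertex set $E$. On the product space $\wt{\Omega} \times \wt{W}$ carrying $\wtv$, $\wt{\ell}_{\point,u}$ and $\wt{\eta}$ as in Theorem \ref{theo2.3}, I would set $\varphi_x = \wtv_x$, $\cI^u = \wt{\cI}^u \cap E$, $\cV^u = E \setminus \cI^u$, and $\eta_x = \wt{\eta}_x$ for $x \in E$. By (\ref{1.6}), $\varphi$ is a Gaussian free field on $E$; by (\ref{1.24}), $\cI^u$ has the same law as the interlacement set at level $u$ on $E$; and the independence of $\varphi$ from $\cI^u$ is immediate from the product structure $\wt{\IP}^G \otimes \wt{\IP}^I$ built into the construction. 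By Theorem \ref{theo2.3}, $\wt{\eta}$ has law $\wt{\IP}^G$, so applying (\ref{1.6}) once more, $\eta$ is a Gaussian free field on $E$.

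The only thing left to verify is the inclusion (\ref{2.47}). Pick any $A \subseteq (0,\infty)$ and any $x \in E$ with $\eta_x \in \sqrt{2u}+A$, so in particular $\eta_x > \sqrt{2u}$. Inspecting the formula (\ref{2.27}): on $\{x \in \cC'_\infty\}$ we have $\wt{\eta}_x = \sqrt{2u} - \sqrt{2\wt{\ell}_{x,u} + \wtv_x^2}\le \sqrt{2u}$, which contradicts $\eta_x > \sqrt{2u}$. Hence $x \notin \cC'_\infty$, and (\ref{2.27}) gives $\eta_x = \wtv_x + \sqrt{2u}$, so $\varphi_x = \eta_x - \sqrt{2u} \in A$.

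It remains to observe that $x \notin \cC'_\infty$ forces $x \in \cV^u$. From (\ref{2.23}) and (\ref{2.25}), $\cC'_\infty \supseteq \cJ \supseteq \wt{\cI}^u$, so $x \notin \cC'_\infty$ in particular implies $x \notin \wt{\cI}^u$, i.e.\ $\wt{\ell}_{x,u}=0$; equivalently $x \in E \setminus \cI^u = \cV^u$. Combining the two conclusions yields $x \in \{\varphi \in A\} \cap \cV^u$, which is (\ref{2.47}).

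There is no real obstacle in this proof: once Theorem \ref{theo2.3} is in place, the corollary is essentially a direct reading of the defining formula (\ref{2.27}), together with the containment $\wt{\cI}^u \subseteq \cC'_\infty$ coming from (\ref{2.23})--(\ref{2.25}) and the identifications (\ref{1.6}), (\ref{1.24}) of the restrictions to $E$.
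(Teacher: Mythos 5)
Your proposal is correct and follows essentially the same route as the paper: restrict $\wtv$, $\wt{\ell}_{\point,u}$, $\wt{\eta}$ to $E$, use the product structure for independence and (\ref{1.6}), (\ref{1.24}), (\ref{2.29}) for the laws, and then read off the inclusion from (\ref{2.27}) together with $\wt{\cI}^u \subseteq \cC'_\infty$, since $\wt{\eta} \le \sqrt{2u}$ on $\cC'_\infty$. Your pointwise verification is just an unpacking of the identity (\ref{2.48}) that the paper states, so there is nothing further to add.
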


\begin{proof}
We denote by $\varphi,\ell,\eta$ the restrictions to $E$ of $\wt{\varphi},\wt{\ell}_{\point, u}, \wt{\eta}$ (defined on the product space $\wt{\Omega} \times \wt{W}$ as in Theorem \ref{theo2.3}), and set $\cI^u = \wt{\cI}^u \cap E$. By (\ref{2.27}), $\wt{\eta}_z \le \sqrt{2u}$ for all $z \in \cC'_\infty \supseteq \wt{\cI}^u$, so that
\begin{equation}\label{2.48}
\eta_x \,1\{\eta_x > \sqrt{2u}\} = (\varphi_x + \sqrt{2u}) \,1\{\varphi_x > 0, x \in \cV^u \backslash\cC'_\infty\}, \;\mbox{for all $x \in E$}.
\end{equation}

\smallskip\n
This readily implies (\ref{2.47}).
\end{proof}

\begin{remark}\label{rem2.6} \rm ~
Here again, with the observation made in Remark \ref{rem2.2} 2) both Theorem \ref{theo2.3} and Corollary \ref{cor2.4} remain true when one replaces assumption (\ref{1.33}) by the weaker but more technical assumption (\ref{1.40a}) (or equivalently by (\ref{1.40b})). \hfill $\square$
\end{remark}

\section{The regular tree case: some preparation}
\setcounter{equation}{0}

We will apply the result of the previous section to the study of level-set percolation of the Gaussian free field on a regular tree. This section contains some preparation. In particular, we introduce an important spectral quantity $\lambda_h$, study some of its properties in Proposition \ref{prop3.1}, and characterize the critical value $h_*$ for the level-set percolation of the Gaussian free field as the unique $h_*$ such that $\lambda_{h_*} = 1$ in Proposition \ref{prop3.3}. In essence, we investigate here a specific branching Markov chain on $\IR$ (involving an Ornstein-Uhlenbeck kernel) with a barrier.

\medskip
We keep similar notation as in the previous sections. We consider $d \ge 2$ and denote by $T$ the $(d+1)$-regular tree endowed with unit weights, so that $T$ plays the role of $E$. The canonical Gaussian free field $(\varphi_x)_{x \in T}$ is a centered Gaussian field on $T$ with covariance
\begin{equation}\label{3.1}
g(x,y) \stackrel{(\ref{0.2})}{=} \mbox{\f $\dis\frac{1}{(d+1)}$} \;E_x \Big[\dsl^\infty_{k=0} 1\{Z_k = y\}\Big], \;\mbox{for $x,y \in T$,}
\end{equation}
with $(Z_k)_{k \ge 0}$ the canonical walk on $T$, which starts at $x \in T$ under $P_x$.

\medskip
For $x \sim y$ in $T$, letting $H_y$ stand for the entrance time of $Z$ in $y$, one has
\begin{equation}\label{3.2}
P_x [H_y < \infty] = \mbox{\f $\dis\frac{1}{d+1}$} \; \Big(\mbox{\f $\dis\frac{d}{d+1}$}\Big)^{-1} = \mbox{\f $\dis\frac{1}{d}$} \,,
\end{equation}
and hence for all $x \in T$
\begin{equation}\label{3.3}
g(x,x) \stackrel{(\ref{3.1})}{=}  \mbox{\f $\dis\frac{1}{(d+1)}$} \;P_x [Z_k \not= x \;\mbox{for all} \; k \ge 1]^{-1} =  \mbox{\f $\dis\frac{1}{d+1}$} \,\Big( 1 - \mbox{\f $\dis\frac{1}{d}$}\Big)^{-1} = \mbox{\f $\dis\frac{d}{d^2-1}$} \;\stackrel{\rm def}{=} \sigma^2\,.
\end{equation}

\n
Given $x \sim x'$ in $T$, we write
\begin{equation}\label{3.4}
\begin{split}
T^+_{x,x'} & = \mbox{the set consisting of $x$ and its ``forward descendants''}
\\
& = \{y \in T; \;x' \notin [x,y]\},
\end{split}
\end{equation}

\n
where $[x,y]$ stands for the unique finite geodesic path on $T$ between $x$ and $y$. We also set
\begin{equation}\label{3.5}
T^-_{x,x'} = T \backslash T^+_{x,x'} \,.
\end{equation}
By the Markov property of the field $(\varphi_x)_{x \in T}$, one knows that
\begin{equation}\label{3.6}
\begin{array}{l}
\mbox{$(\varphi_y - P_y[H_{x'} < \infty] \varphi_{x'})_{y \in T^+_{x,x'}}$ is a centered Gaussian field independent}
\\
\mbox{of $\sigma(\varphi_{y'}, y' \in T^-_{x,x'})$, with covariance $g_{T^+_{x,x'}}(\cdot,\cdot)$},
\end{array}
\end{equation}

\n
where for $U \subseteq T$, $g_U(\cdot,\cdot)$ stands for the killed Green function outside $U$ (defined as in (\ref{3.1}), but now with a summation over $k \ge 0$ smaller than the exit time of $Z$ from $U$). In particular, choosing $y = x$, we have
\begin{align}
\varphi_x = \mbox{\f $\dis\frac{1}{d}$} \; \varphi_{x'} + \xi_{x,x'}, & \;\;\mbox{where $\xi_{x,x'}$ is independent of $\sigma(\varphi_{y'},y' \in T^-_{x,x'})$} \label{3.7}
\\[-1ex]
&\;\; \mbox{and has variance $\big(1 - \mbox{\f $\dis\frac{1}{d^2}$}\big) \sigma^2 \stackrel{(\ref{3.3})}{=} \mbox{\f $\dis\frac{1}{d}$}$}\,. \nonumber
\end{align}

\n
We then fix a base point $x_0 \in T$ and $x_{-1} \sim x_0$, and simply write
\begin{equation}\label{3.8}
T^+ = T^+_{x_0,x_{-1}}, \;T^- = T^-_{x_0,x_{-1}}, \; \xi = \xi_{x_0,x_{-1}} .
\end{equation}
For $n \ge 0$, we also have the set of $n$-th generation descendants of $x_0$ in $T^+$
\begin{equation}\label{3.9}
T^+_n = \{y \in T^+; \;d (x_0,y) = n\}.
\end{equation}

\medskip\n
We denote by $\nu$ the centered Gaussian law on $\IR$ with variance $\sigma^2$, and by $Q_t$, $t \ge 0$, the Ornstein-Uhlenbeck semigroup with variance $\sigma^2$, so that for $g$ bounded measurable on $\IR$
\begin{equation}\label{3.10}
Q_t \,g(a) = E^Y[g(a e^{-t} + \sqrt{1 - e^{-2t}} \,Y)], \;\mbox{for $a \in \IR, t \ge 0$},
\end{equation}

\n
where $Y$ is $\nu$-distributed and $E^Y$ stands for the expectation with respect to $Y$, see also \cite{IkedWata89}, p.~356. Actually, $Q_t$ extends as a self-adjoint contraction on $L^2(\nu)$ and admits the following expansion in an orthonormal basis of eigenfunctions (see \cite{IkedWata89}, p.~354-356):
\begin{equation}\label{3.11}
Q_t \,g(a) = \dsl_{n \ge 0} e^{-nt} h_{n,\sigma}(a) \big\langle h_{n,\sigma}, g\big\rangle_\nu, \;\mbox{for} \; a \in \IR, \, g \in L^2(\nu),
\end{equation}

\n
where $\langle \cdot,\cdot \rangle_\nu$ stands for the $L^2(\nu)$-scalar product and
\begin{equation}\label{3.12}
h_{n,\sigma}(a) = \sqrt{n!} \; H_n \,\big(\mbox{\f $\dis\frac{a}{\sigma}$}\big),
\end{equation}

\medskip\n
with $H_n(\cdot)$ the $n$-th Hermite polynomial (so that $\int_{\IR}  H_n (b) \, H_m (b) \,e^{-\frac{b^2}{2}} \,\frac{db}{\sqrt{2 \pi}} = \frac{1}{n!} \; \delta_{n,m}$, for $n,m \ge 0$, and $H_1(x) = x$, $H_2(x) = \frac{1}{2}\, (x^2-1)$, $H_3(x) = \frac{x^3}{6} - \frac{x}{2}$).

\medskip
In particular, (\ref{3.7}), (\ref{3.10}) yield that for all $x \sim x'$
\begin{align}
\mbox{for all $g \in L^2(\nu)$}, & \;\; \IE^G [ g(\varphi_x) \,| \,\sigma(\varphi_{y'}, y' \in T^-_{x,x'})] = (Q_{t_1} g) (\varphi_{x'}),\label{3.13}
\\
&\;\;\mbox{with} \; t_1 = \log \, \mbox{\f $d$} \quad \big(\mbox{so}\; e^{-t_1} = \mbox{\f $\dis\frac{1}{d}$}\big). \nonumber
\end{align}

\n
One also has hypercontradictivity estimates for the semigroup $(Q_t)_{t \ge 0}$, see \cite{IkedWata89}, p.~367:
\begin{equation}\label{3.14}
\| Q_t \, g\|_{L^q(\nu)} \le \|g\|_{L^p(\nu)}, \; \mbox{when} \; q-1 = (p-1) \,e^{2t} \;\mbox{and} \;1 < p < \infty.
\end{equation}

\n
We can then introduce the operators with $e^{-t_1} = 1/d$ as in (\ref{3.13}) and $h \in \IR$:
\begin{equation}\label{3.15}
\mbox{$L = d \,Q_{t_1}$, $\pi_h =$ multiplication by $1_{[h,\infty)}$, $L_h = \pi_h \, L \pi_h$}
\end{equation}

\n
(where $d\,Q_{t_1}$ denotes the multiplication of $Q_{t_1}$ by the scalar $d$).

\medskip
By (\ref{3.11}) we know that $L$ is a self-adjoint, non-negative Hilbert-Schmidt operator on $L^2(\nu)$, and by Theorem 6.22 or 6.23, p.~210 of \cite{ReedSimo79} that:
\begin{equation}\label{3.16}
\mbox{$L_h$ is a self-adjoint, non-negative, Hilbert-Schmidt operator on $L^2(\nu)$}.
\end{equation}

\n
We can then define the crucial quantity 
\begin{equation}\label{3.17}
\lambda_h = \|L_h\|_{L^2(\nu) \rightarrow L^2(\nu)} = \sup\big\{\big\langle g, L_h g 
\big\rangle_\nu; \;\|g\|_{L^2(\nu)} = 1\big\}, \; h \in \IR,
\end{equation}

\n
where $\| \cdot \|_{L^2(\nu) \r L^2(\nu)}$ denotes the operator norm and (\ref{3.17}) coincides with (\ref{0.9}) via the explicit calculation of $\big\langle g, L_h g\big\rangle_\nu$ with (\ref{3.10}).

\medskip
The next proposition will be very helpful.

\begin{proposition}\label{prop3.1}
For all $h \in \IR$, $\lambda_h$ is a simple eigenvalue of $L_h$ and there exists a unique $\chi_h \ge 0$, with unit $L^2(\nu)$-norm, continuous, positive on $[h,+ \infty)$, vanishing on $(-\infty,h)$, such that
\begin{equation}\label{3.18}
L_h \,\chi_h = \lambda_h\,\chi_h\,.
\end{equation}
In addition, for all $k \ge 0$, one has
\begin{align} 
&\|\chi_h\|_{L^{q_k}(\nu)} \le (d/\lambda_h)^k,\;\mbox{where} \; q_k = 1 + d^{2k}.\label{3.19}
\\[1ex]
&\mbox{The map $h \r \lambda_h$ is a decreasing homeomorphism of $\IR$ onto $(0,d)$}. \label{3.20}
\end{align}
\end{proposition}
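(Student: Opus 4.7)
The plan is to combine compactness (from (\ref{3.16})) with a Perron--Frobenius style argument exploiting the strict positivity of the Ornstein--Uhlenbeck kernel, then bootstrap integrability via hypercontractivity (\ref{3.14}), and finally read off the properties of $h \mapsto \lambda_h$ from a mix of variational estimates and Hilbert--Schmidt continuity of $L_h$. For the eigenfunction itself: $L_h$ is compact, self-adjoint and non-negative, so $\lambda_h = \|L_h\|$ is an eigenvalue. Because $L_h$ has a non-negative kernel, replacing any $L^2(\nu)$-maximizer $g$ of (\ref{3.17}) by $|g|$ gives a non-negative normalized eigenfunction $\chi_h$. The relation $\lambda_h \chi_h = \pi_h L \chi_h$ (using $\pi_h \chi_h = \chi_h$), together with the continuity and strict positivity of the integral kernel of $L$ (a rescaled Mehler kernel, see (\ref{3.10})), forces $\chi_h$ to be continuous, strictly positive on $[h,\infty)$, and zero on $(-\infty,h)$. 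Simplicity then follows from the standard pointwise Perron--Frobenius inequality: for any real eigenfunction $\psi$ with eigenvalue $\lambda_h$, one has $\lambda_h |\psi| = |\pi_h L \psi| \le L_h |\psi|$ pointwise while $\langle |\psi|, L_h |\psi|\rangle_\nu \le \lambda_h \||\psi|\|_{L^2(\nu)}^2$, so equality must hold, $|\psi|$ is itself an eigenfunction and hence strictly positive on $[h,\infty)$; continuity of $\psi$ on $[h,\infty)$ combined with non-vanishing forces $\psi$ to have constant sign there, and uniqueness of the non-negative normalized eigenfunction pins $\psi$ down to a scalar multiple of $\chi_h$.

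For the hypercontractive bound (\ref{3.19}) I argue by induction on $k$, the case $k=0$ being the normalization. Assuming $\|\chi_h\|_{L^{q_k}(\nu)} \le (d/\lambda_h)^k$, the identity $\chi_h = \lambda_h^{-1} \pi_h L \chi_h$ with the pointwise bound $|\pi_h L \chi_h| \le L \chi_h$ and (\ref{3.14}) applied with $p = q_k$, $t = t_1 = \log d$ (so that $q - 1 = (q_k - 1) d^2$, i.e.\ $q = q_{k+1}$) yield
\[
\|\chi_h\|_{L^{q_{k+1}}(\nu)} \le \lambda_h^{-1} \|L\chi_h\|_{L^{q_{k+1}}(\nu)} = d\lambda_h^{-1} \|Q_{t_1}\chi_h\|_{L^{q_{k+1}}(\nu)} \le d\lambda_h^{-1} \|\chi_h\|_{L^{q_k}(\nu)} \le (d/\lambda_h)^{k+1}.
\]

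For (\ref{3.20}), monotonicity is clear from $\pi_{h'} \le \pi_h$ whenever $h \le h'$, giving $\langle g, L_{h'} g\rangle_\nu \le \langle g, L_h g\rangle_\nu$ for all $g \in L^2(\nu)$. Strict decrease uses simplicity: if $\lambda_h = \lambda_{h'}$ with $h < h'$, the zero-extension of $\chi_{h'}$ is a unit vector with $\pi_h$-support attaining $\lambda_h$ in (\ref{3.17}), hence by uniqueness equals $\chi_h$, contradicting strict positivity of $\chi_h$ on $[h,h')$. Continuity of $h \mapsto \lambda_h$ follows from $\|L_{h'} - L_h\|_{\mathrm{HS}} \to 0$ as $h' \to h$ by dominated convergence in the kernel. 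For the limits, testing (\ref{3.17}) with the unit vector $1_{[h,\infty)}/\sqrt{\nu([h,\infty))}$ gives $\lambda_h \to \langle 1, L\, 1\rangle_\nu = d$ as $h \to -\infty$, whereas $\lambda_h \le \|L_h\|_{\mathrm{HS}} \to 0$ as $h \to +\infty$, again by dominated convergence. The main technical delicacy is the Perron--Frobenius/simplicity step, which must be set up to accommodate the fact that $L_h$ acts on all of $L^2(\nu)$ while positivity improvement only holds on the sub-square $[h,\infty)^2$; once that is in place, the remaining items follow from short, largely explicit computations using the Mehler kernel.
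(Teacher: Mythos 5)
Your proposal is correct, and its skeleton coincides with the paper's for the core items: existence of a non-negative maximizer via the positive kernel, positivity and continuity of $\chi_h$ from the relation $\lambda_h\chi_h=\pi_h L\chi_h$ and the explicit Mehler-type kernel, the hypercontractive induction for (\ref{3.19}) (identical to (\ref{3.22})), and strict monotonicity of $\lambda_h$ via simplicity and the variational formula. Where you genuinely depart from the paper is in the continuity of $h\mapsto\lambda_h$ and the limit at $+\infty$: the paper proves lower semicontinuity plus right-continuity from monotonicity, and then left-continuity (and $\lambda_h\to 0$) by extracting $L^2(\nu)$-limits of $L\chi_{h_n}$ along $h_n\uparrow h$ using compactness of $L$, identifying the left limit as an eigenvalue of $L_h$; you instead observe that $L$ is Hilbert--Schmidt (which is exactly (\ref{3.16}), i.e.\ the kernel lies in $L^2(\nu\otimes\nu)$ since $d\ge 2$), so $\|L_{h'}-L_h\|_{\mathrm{HS}}\to 0$ and $\|L_h\|_{\mathrm{HS}}\to 0$ by dominated convergence, which gives continuity and the vanishing limit in one stroke. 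This is a cleaner and perfectly valid shortcut; the paper's subsequence argument is more robust (it would survive if one only knew compactness rather than square-integrability of the kernel), but here the kernel estimate is free. Two small points to tighten: in the simplicity step, the phrase ``uniqueness of the non-negative normalized eigenfunction'' is invoked before being proved — you should add the one-line completion that, since every $\lambda_h$-eigenfunction is of constant sign with modulus strictly positive on $[h,\infty)$, no two eigenfunctions can be orthogonal, so the eigenspace is one-dimensional (the paper instead orthogonalizes an arbitrary eigenfunction against the constant $1$ and uses the no-sign-change dichotomy); and you should note explicitly that $\lambda_h>0$ (e.g.\ by testing with a normalized indicator), which justifies both that $\|L_h\|$ is an eigenvalue and that eigenfunctions are supported in $[h,\infty)$.
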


\begin{proof}
We start with the first part of the proposition. We consider $h \in \IR$. By (\ref{3.16}) we can find $g$ with $\|g\|_{L^2(\nu)} = 1$, such that $L_h g = \lambda_h g$. If $g$ changes sign (i.e.~both $g^+$ and $g^-$ are not a.e.~$0$), then by (\ref{3.10}) (see also the expression in (\ref{0.9})), $\big\langle |g|, L_h |g|\big\rangle_\nu > \big\langle g,L_h g\big\rangle_\nu = \lambda_h$, with $|g|$ unit in $L^2(\nu)$, a contradiction. Hence, $g$ does not change sign and without loss of generality we can assume that it is non-negative.

\medskip
As we now explain, $\lambda_h$ is a simple eigenvalue. Indeed, if $f \in L^2(\nu)$ is an eigenfunction of $L_h$ attached to $\lambda_h$, we can choose $\alpha \in \IR$, so that $\big\langle f - \alpha g, 1\big\rangle_\nu = 0$. Then $f - \alpha g$ is an eigenfunction attached to $L_h$ and by the above paragraph, it does not change sign. It follows that $f- \alpha g = 0$ in $L^2(\nu)$ and $\lambda_h$ is a simple eigenvalue. Thus, $g$ is uniquely determined in $L^2(\nu)$ and we call it $\chi_h$. Note that 
\begin{equation}\label{3.21}
\begin{split}
\lambda_h \,\chi_h (a)  \stackrel{(\ref{3.10}),(\ref{3.15})}{=} &d\; \sqrt{\mbox{\f $\dis\frac{d}{2 \pi}$}} \;\dis\int^\infty_h e^{-\frac{d}{2} \,(b - \frac{a}{d})^2} \chi_h(b) \,db,
\\[1ex]
& \hspace{-6ex}= \mbox{\f $\dis\frac{d^2}{\sqrt{d^2-1}}$} \; \dis\int^\infty_h e^{-\frac{a^2}{2d} + ab - \frac{b^2}{2d}} \chi_h(b) \, d\nu(b), \ \mbox{when $a \ge h$, and}
\\[1.5ex]
&  \hspace{-6ex} = 0, \;\mbox{when $a < h$}.
\end{split}
\end{equation}

\n 
By the second and third line (and dominated convergence) we see that we can choose $\chi_h$ to be continuous and positive on $[h, + \infty)$ and equal to zero on $(-\infty,h)$. We now prove (\ref{3.19}). Note that $q_k = 1 + e^{2 t_1 k}$. We then use hypercontractivity, cf.~(\ref{3.14}), and the fact that $\pi_h$ contracts $L^q(\nu)$-norms to find that for $k \ge 1$,
\begin{equation}\label{3.22}
(\lambda_h/d)^k \|\chi_h\|_{L^{q_k}(\nu)} = (\lambda_h/d)^{k-1} \big\| \mbox{\f $\dis\frac{1}{d}$}\;L_h \chi_h \big\|_{L^{q_k}(\nu)} \stackrel{(\ref{3.14})}{\le} (\lambda_h/d)^{k-1} \|\chi_h\|_{L^{q_{k-1}}(\nu)},
\end{equation}

\n
and we obtain (\ref{3.19}) by induction (since it holds when $k = 0$).

\medskip
We now turn to the proof of (\ref{3.20}). By (\ref{3.17}) it is immediate that $h \r \lambda_h$ is a non-increasing $[0,d]$-valued function. As we now explain
\begin{equation}\label{3.23}
\mbox{$h \r \lambda_h$ is decreasing.}
\end{equation}

\n
Indeed, for $h > h'$, one has $\lambda_h = \big\langle \chi_h, L_h \, \chi_h \big\rangle_\nu = \big\langle \chi_h, L_{h'} \chi_h\big\rangle_\nu$, but $\chi_h$ is not a multiple of $\chi_{h'}$ (since $\chi_{h'}$ does not vanish on $[h',h)$), and the last expression is smaller than $\langle \chi_{h'}, L_{h'}\, \chi_{h'} \big\rangle_\nu = \lambda_{h'}$, whence (\ref{3.23}). Next, we show that
\begin{equation}\label{3.24}
\mbox{$h \r \lambda_h$ is continuous.}
\end{equation}

\n
By (\ref{3.17}) this function is a supremum of continuous functions on $\IR$, hence it is lower semi-continuous, and by (\ref{3.23}) it is right-continuous. Let us now prove that it is left-continuous. Given $h \in \IR$, since $L$ is a compact operator, we can find $h_n \uparrow h$ such that $\wt{\rho_n} = L \pi_{h_n} \chi_{h_n} = L \chi_{h_n} \r \wt{\rho}$ in $L^2(\nu)$. We set $\rho = \pi_h \,\wt{\rho}$. Then,
\begin{align}
& \pi_{h_n} \wt{\rho}_n - \rho = \pi_{h_n} (\wt{\rho}_n - \wt{\rho}) + (\pi_{h_n} - \pi_h) \,\wt{\rho} \underset{n}{\longrightarrow}  0 \; \mbox{in $L^2(\nu)$, so that} \label{3.25}
\\
&\lambda_{h_n} \chi_{h_n} = \pi_{h_n} L \pi_{h_n} \chi_{h_n} = \pi_{h_n} \wt{\rho}_n \,\underset{n}{\stackrel{L^2(\nu)}{\longrightarrow}} \rho, \;\mbox{where $\rho = 0$ on $(-\infty,h)$}. \label{3.26}
\end{align}

\n
In particular, looking at norms, one finds $\|\rho\|_{L^2(\nu)} = \lim\limits_n \lambda_{h_n} = \lambda^-_h \ge \lambda_h$ (with $\lambda^-_h$ the left-limit at $h$ of $\lambda_\point$). Moreover, one has
\begin{equation*}
L_h \rho = \pi_h L \pi_h\, \rho = \pi_h L \rho \stackrel{(\ref{3.26})}{=} \lim\limits_n \pi_h L \lambda_{h_n} \chi_{h_n} \stackrel{(\ref{3.18}), h_n < h}{=} \lim\limits_n \lambda_{h_n} \pi_h \lambda_{h_n} \chi_{h_n} \stackrel{(\ref{3.26})}{=} \lambda^-_h \pi_h \rho = \lambda^-_h \rho.
\end{equation*}

\n
Since $\rho$ vanishes on $(-\infty,h)$, this shows that $\lambda^-_h \le \lambda_h$. The left-continuity of $\lambda_\point$ and (\ref{3.24}) follow. It is now a simple fact that
\begin{equation}\label{3.27}
\lim\limits_{h \r -\infty} \lambda_h = d.
\end{equation}

\n
Indeed, $\lambda_h \le d$ and $\big\langle 1, L_h 1\big\rangle_\nu \r \big\langle 1, L1\big\rangle_\nu = d$, as $h \r -\infty$ and $\|1\|_{L^2(\nu)} = 1$. Finally, 
\begin{equation}\label{3.28}
\lim\limits_{h \r + \infty} \lambda_h = 0.
\end{equation}

\n
Indeed, by compactness of $L$, for some $h_n \uparrow \infty$, $\psi_n = L \pi_{h_n} \chi_{h_n} \underset{n}{\longrightarrow} \psi$ in $L^2(\nu)$. Then, $\lambda_{h_n} \chi_{h_n} \stackrel{(\ref{3.18})}{=} \pi_{h_n}  \psi_n = \pi_{h_n} \psi + \pi_{h_n} (\psi_n - \psi) \underset{n}{\longrightarrow} 0$ in $L^2(\nu)$. Looking at $L^2(\nu)$-norms, the claim (\ref{3.28}) follow. This completes the proof of Proposition \ref{prop3.1}. 
\end{proof}

\begin{remark}\label{rem3.2} \rm  It is not hard to infer from (\ref{3.21}) that $\chi_h$ is $C^\infty$ when restricted to $[h,+ \infty)$. However, the above proposition leaves open questions concerning the monotonicity or the boundedness of $\chi_h$, see also Remarks \ref{rem3.4} and \ref{4.4}. \hfill $\square$
\end{remark}

\medskip
We now want to characterize the critical level for level-set percolation of $\varphi$ on $T$. Level-set percolation of $\varphi$ on $T^+$, see (\ref{3.8}) for notation, amounts to the study of a specific branching Markov chain on $\IR$ with a barrier, where each individual has a level $a \ge h$, and independently gives rise in the next generation to $d$ individuals with levels distributed as independent $N(\frac{a}{d}, \frac{1}{d}$)-variables, and these individuals are killed if the level falls below $h$. We refer to \cite{BiggKypr04} and the references therein for related models. With this in mind, for $h \in \IR$ and $n \ge 0$ (see (\ref{3.9}) and below (\ref{3.4}) for notation), we define
\begin{equation}\label{3.29}
\cZ^h_n = \{x \in T^+_n; \; \varphi_y \ge h \; \mbox{for all} \; y \in [x_0,x]\},
\end{equation}

\n
so that $\cZ^h_n$ denotes the intersection of $T^+_n$ with the cluster of $x_0$ in the level-set $\{\varphi \ge h\}$. We also introduce the filtration
\begin{equation}\label{3.30}
\cF_n = \sigma(\varphi_x; x \in T^+, d(x_0,x) \le n), \, n \ge 0,
\end{equation}
as well as the $(\cF_n)$-adapted process
\begin{equation}\label{3.31}
M_n = \lambda_h^{-n} \dsl_{x \in \cZ^h_n} \chi_h(\varphi_x), \; n \ge 0.
\end{equation}

\begin{proposition}\label{prop3.3}
Define $h_* \in \IR$ as the unique value (cf.~(\ref{3.20})) such that
\begin{equation}\label{3.32}
\lambda_{h_*} = 1.
\end{equation}

\vspace{-5ex}
\begin{align}
\mbox{For $h < h_*$, $\IP^G$-a.s., $\{\varphi \ge h\}$} &\; \mbox{has an infinite component}. \label{3.33}
\\[1ex]
\mbox{For $h > h_*$, $\IP^G$-a.s., $\{\varphi \ge h\}$} &\; \mbox{only has finite components}. \label{3.34}
\end{align}
\end{proposition}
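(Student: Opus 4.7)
The plan is to treat $(M_n)$ as the natural martingale of a branching Markov chain with a barrier and analyze it separately in the two regimes, drawing on the spectral information from Proposition~\ref{prop3.1}. By the Markov property~(\ref{3.7}), given $\cF_n$ and $x\in\cZ^h_n$, the values of $\varphi$ at the $d$ children of $x$ in $T^+$ are i.i.d.\ $N(\varphi_x/d,1/d)$. Since $\chi_h$ vanishes on $(-\infty,h)$ and $\varphi_x\ge h$,
\begin{equation*}
\IE^G\Big[\dsl_{y\text{ child of }x}\chi_h(\varphi_y)\,1\{\varphi_y\ge h\}\,\Big|\,\cF_n\Big]=dQ_{t_1}(\pi_h\chi_h)(\varphi_x)=L_h\chi_h(\varphi_x)=\lambda_h\chi_h(\varphi_x),
\end{equation*}
and summing over $x\in\cZ^h_n$ gives $\IE^G[M_{n+1}\,|\,\cF_n]=M_n$. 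The analogous one-step argument with test function $f\equiv1$ yields by iteration the many-to-one formula $\IE^G[|\cZ^h_n|]=\langle 1_{[h,\infty)},L^n_h 1\rangle_\nu\le\lambda_h^n$, using self-adjointness and $\|L_h\|=\lambda_h$. For $h>h_*$, $\lambda_h<1$, so by Borel-Cantelli $\cZ^h_n$ is empty for all sufficiently large $n$, $\IP^G$-a.s.; running the same analysis along each of the $d+1$ subtrees attached to $x_0$ and taking a countable union bound over all base points yields~(\ref{3.34}).

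\medskip
\emph{Supercritical regime.} For $h<h_*$, $\lambda_h>1$, and the next step is to prove that $(M_n)$ is bounded in $L^2(\IP^G)$. Exploiting conditional independence, given $\cF_n$, of the offspring of distinct $x,x'\in\cZ^h_n$ and of the $d$ i.i.d.\ children of a single $x$, a second-moment computation yields
\begin{equation*}
\IE^G[M_{n+1}^2\,|\,\cF_n]\le M_n^2+\lambda_h^{-2(n+1)}\dsl_{x\in\cZ^h_n}L(\chi_h^2)(\varphi_x).
\end{equation*}
Taking expectations and applying the many-to-one bound with $f=L(\chi_h^2)$ gives $\IE^G[M_{n+1}^2]\le\IE^G[M_n^2]+\lambda_h^{-(n+2)}\|L(\chi_h^2)\|_{L^2(\nu)}$, whose geometric series converges because $\lambda_h>1$, provided $\|L(\chi_h^2)\|_{L^2(\nu)}<\infty$. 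The spectral input enters precisely here: hypercontractivity~(\ref{3.14}) makes $L=dQ_{t_1}$ bounded from $L^{1+1/d^2}(\nu)$ into $L^2(\nu)$, while~(\ref{3.19}) at $k=1$ places $\chi_h$ in $L^{1+d^2}(\nu)\subseteq L^{2+2/d^2}(\nu)$, so that $\chi_h^2\in L^{1+1/d^2}(\nu)$. Consequently $M_n\to M_\infty$ in $L^2$ and a.s., with $\IE^G[M_\infty]=\IE^G[\chi_h(\varphi_{x_0})]>0$, so $\IP^G[M_\infty>0]>0$. Since $\{M_\infty>0\}\subseteq\{\cZ^h_n\ne\emptyset\text{ for all }n\}$, the cluster of $x_0$ in $\{\varphi\ge h\}$ is infinite with positive probability.

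\medskip
\emph{From positive probability to almost sure existence.} The event that $\{\varphi\ge h\}$ has an infinite component is unchanged by modifying $\varphi$ on any finite set $F$: any infinite component $C$ of $\{\varphi\ge h\}$ contains, by bounded degree of $T$, an infinite connected piece lying in $T\setminus F$, and this piece persists under such a modification. Hence the event belongs to the tail $\sigma$-algebra $\bigcap_{F\text{ finite}}\sigma(\varphi_y;y\notin F)$; since the covariance $g(x,y)=\sigma^2 d^{-d(x,y)}$ decays exponentially, standard tail-triviality for Gaussian fields forces probability $0$ or $1$, and by the previous paragraph it is $1$, proving~(\ref{3.33}). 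The principal obstacle throughout is the second-moment control, where one must align the hypercontractive index of $L$ with the $L^p$-range of $\chi_h$ afforded by~(\ref{3.19})---fortunately $k=1$ already suffices.
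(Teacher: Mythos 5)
Your subcritical and supercritical arguments are correct, and they take a genuinely different (in places simpler) route than the paper. For $h>h_*$ you use the many-to-one identity $\IE^G[|\cZ^h_n|]=\langle 1_{[h,\infty)},L_h^n 1\rangle_\nu\le \lambda_h^n$ and Borel--Cantelli, whereas the paper works with the extinction probability $q_h$ and the fixed-point inequality $0\le r_h\le L_h r_h$ of (\ref{3.46})--(\ref{3.47}); your version avoids $\chi_h$ altogether and sidesteps the concern voiced in Remark \ref{rem3.4}. For $h<h_*$ you bound $\IE^G[M_n^2]$ directly under $\IP^G$ via the conditional-variance recursion plus many-to-one with $f=L(\chi_h^2)$, using hypercontractivity (\ref{3.14}) and (\ref{3.19}) at $k=1$ to get $L(\chi_h^2)\in L^2(\nu)$; the paper instead tilts by $\chi_h(\varphi_{x_0})/\langle\chi_h\rangle_\nu$ and computes the second moment exactly, needing only $\chi_h\in L^3(\nu)$. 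Both computations check out, and your reduction of (\ref{3.34}) to the forward clusters over all pairs (vertex, neighbour) is the same as the paper's implicit one.

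The genuine gap is the final step of (\ref{3.33}), where you pass from positive probability of an infinite cluster to an almost sure statement. Tail-measurability of the event is fine, but the justification of tail triviality --- ``the covariance $g(x,y)=\sigma^2 d^{-d(x,y)}$ decays exponentially, so standard tail-triviality for Gaussian fields forces probability $0$ or $1$'' --- is not a quotable theorem, and on the tree the heuristic behind it is misleading: the sphere $S_N(x_0)$ has $(d+1)d^{N-1}$ points each with covariance $\sigma^2 d^{-N}$ with $\varphi_{x_0}$, so $\sum_{y\in S_N}g(x_0,y)$ stays of order one and no argument based solely on the size of the two-point function can work; decay of correlations alone does not imply tail triviality for Gaussian fields in general. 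The conclusion you need is true, but it requires genuinely Gaussian/Markovian input: for instance, combine the Kolmogorov--Rozanov theorem (the maximal correlation between two Gaussian-generated $\sigma$-algebras equals the canonical correlation between the corresponding linear spans) with the fact that the projection of any $\varphi_x$ onto the span of $\{\varphi_y;\,y\notin B_N\}$ has variance $g(x,x)-g_{B_N^\circ}(x,x)\to 0$ (monotone convergence as in (\ref{1.4})) and the nondegeneracy of the covariance on finite sets; this gives asymptotic independence of the tail from every $\sigma(\varphi_x,\,x\in F)$, hence triviality. Alternatively, do what the paper does and invoke automorphism invariance of the level-set percolation together with Corollary 10 of \cite{Pema92}, which is exactly the tool used there to conclude (\ref{3.33}). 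As written, this step is asserted rather than proved, and the stated reason for it would not survive scrutiny.
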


\begin{proof}
We first prove (\ref{3.33}). Note that classically, see Theorem 1, p.~247 in Chapter 6 \S 4 \cite{AthrNey04}, one knows (with the help of (\ref{3.13})) that
\begin{equation}\label{3.35}
\mbox{$(M_n)_{n \ge 0}$ is a non-negative $(\cF_n)$-martingale under $\IP^G$}.
\end{equation}

\medskip\n
We will show that when $h< h_*$, $M_n$ converges a.s. to a non-identically vanishing limit. We could base the proof on (2.1) of Theorem 2.1 in \cite{BiggKypr04}, but it is actually slightly more precise and essentially as quick to observe that under $Q = \chi_h (\varphi_{x_0}) / \big\langle \chi_h\big\rangle_\nu \IP^G$ (where $\big\langle f \big\rangle_\nu$ stands for $\int_\IR f(d\nu)$), $(M_n)_{n \ge 0}$ is an $(\cF_n)$-martingale (note that $\frac{dQ}{d \IP^G}$ is $\cF_0$-measurable) and
\begin{equation}\label{3.36}
\sup\limits_{n \ge 0} E^Q [ M_n^2] < \infty.
\end{equation}
Indeed, for $n \ge 1$, by the orthogonality of the increments
\begin{equation}\label{3.37}
\begin{array}{l}
E^Q[M^2_n] = E^Q[M^2_0] + \dsl^n_{k=1} \,E^Q[(M_k - M_{k-1})^2], \;\mbox{and}
\\[1ex]
M_k - M_{k-1} = \lambda_h^{-(k-1)} \dsl_{x \in \cZ^h_{k-1}} \; \dsl^d_{j=1} \,\big(\lambda^{-1}_h \chi_h (\varphi_{(x,j)}) - \mbox{\f $\dis\frac{1}{d}$} \;\chi_h(\varphi_x)\big),
\end{array}
\end{equation}

\n
where $(x,1),\dots,(x,d)$ denote the descendants of $x \in T^+$.

\medskip
Note that (\ref{3.19}) ensures the finiteness of the above expectations. By (\ref{3.7}), (\ref{3.13}) (applied at neighboring sites in $T^+_k$ and $T^+_{k-1}$) the summands under parenthesis in the second line of (\ref{3.37}), conditionally on $\cF_{k-1}$, are centered and independent under $\IP^G$ or $Q$. We thus find that for $k \ge 1$,
\begin{equation}\label{3.38}
\begin{split}
E^Q[(M_k - M_{k-1})^2] & = \lambda^{-2k}_h \,E^Q\Big[\dsl_{x \in \cZ^h_{k-1}} \;\dsl^d_{j=1} \,E^Q[\big(\chi_h(\varphi_{(x,j)}) - \mbox{\f $\dis\frac{\lambda_h}{d}$} \; \chi_h(\varphi_x)\big)^2 | \cF_{k-1}]\Big]
\\
& = \lambda_h^{-2k} \,E^Q \Big[\dsl_{x \in \cZ_{k-1}^h} \,d\big(Q_{t_1} \chi^2_h - \mbox{\f $\dis\frac{\lambda^2_h}{d^2}$} \,\chi^2_h\big) (\varphi_x)\Big],
\end{split}
\end{equation}

\medskip\n
using the conditional centering, (\ref{3.13}) and (\ref{3.18}) in the last step. Note that for $x \in T^+_{k-1}$
\begin{equation}\label{3.39}
\begin{array}{l}
E^Q[x \in \cZ^h_{k-1}, f(\varphi_x)]  = \IE^G[\chi_h(\varphi_{x_0}), \,\varphi_y \ge h \;\mbox{for all $y \in [x_0,x], \,f(\varphi_x)] / \big\langle \chi_h\big\rangle_\nu$}
\\
= \big(\mbox{\f $\dis\frac{\lambda_h}{d}$}\big)^{k-1} \IE^G[\chi_h(\varphi_x) \,f(\varphi_x)] / \big\langle \chi_h\big\rangle_\nu, \;\mbox{for $f \ge 0$ measurable on $\IR$},
\end{array}
\end{equation}

\n
where we have made iterated use of the Markov property and (\ref{3.18}). Since $\chi_h \in L^3(\nu)$ by (\ref{3.19}), we can replace $f$ by $Q_{t_1} \chi^2_h - \frac{\lambda_h^2}{d^2} \;\chi^2_h$ in the above, and sum over $x \in T^+_{k-1}$ to find
\begin{equation}\label{3.40}
\begin{split}
E^Q[(M_k - M_{k-1})^2] & = d^{k-1} d \,\lambda_h^{-2k} \;\big(\mbox{\f $\dis\frac{\lambda_h}{d}$}\big)^{k-1} \big\langle\chi_h, Q_{t_1}(\chi^2_h) - \mbox{\f $\dis\frac{\lambda_h^2}{d^2}$} \;\chi^2_h\big\rangle_\nu / \big\langle \chi_h\big\rangle_\nu
\\
& = d \lambda_h^{-(k+1)} \big\langle \mbox{\f $\dis\frac{\lambda_h}{d}$} \;\chi^3_h - \mbox{\f $\dis\frac{\lambda_h^2}{d^2}$} \;\chi^3_h \big\rangle_\nu / \big\langle \chi_h\big\rangle_\nu = \lambda^{-k}_h \;\mbox{\f $\dis\frac{\big\langle\chi^3_\nu\big\rangle_\nu}{\big\langle\chi_h\big\rangle_\nu}$} \;\big(1 - \mbox{\f $\dis\frac{\lambda_h}{d}$}\big).
\end{split}
\end{equation}
Coming back to (\ref{3.37}) we find that
\begin{equation}\label{3.41}
E^Q[M^2_n] = \Big(1 + \dsl^n_{k=1} \lambda^{-k}_h \big(1 - \mbox{\f $\dis\frac{\lambda_h}{d}$}\big)\Big) \big\langle \chi^3_h\big\rangle_\nu / \big\langle \chi_h\big\rangle_\nu ,
\end{equation}

\medskip\n
and (\ref{3.36}) follows. In particular, $M_n \r M_\infty$ $Q$-a.s. and hence $\IP^G$-a.s., where $M_\infty \ge 0$ and $E^Q[M_\infty] = E^Q[M_0] = \big\langle \chi^2_h\big\rangle_\nu / \big\langle \chi_h\big\rangle_\nu = \big\langle \chi_h\big\rangle_\nu^{-1} > 0$. This implies that with $\IP^G$-positive measure $\cZ^h_n$ is non-empty for all $n \ge 0$, and the connected component of $x_0$ in $\{\varphi \ge h\} \cap T^+$ is infinite. The claim (\ref{3.33}) can for instance be deduced from the weak law of large numbers in Corollary 10 of \cite{Pema92}. 

\medskip
We will now prove (\ref{3.34}). We consider the function in $L^\infty(\nu)$
\begin{equation}\label{3.42}
q_h(a) = \IP^G[|\cZ^h_n| = 0, \;\mbox{for large $n| \varphi_{x_0} = a], \, a \in \IR$},
\end{equation}

\n
where we make use of the Markov property (\ref{3.6}) (with $x'$ chosen as $x_0$) to define the conditional expectation in (\ref{3.42}), and $|A|$ stands for the cardinality of $A$ when $A \subseteq T$. We will show that
\begin{equation}\label{3.43}
\mbox{when $h > h_*$, $\nu$-a.s., $q_h = 1$},
\end{equation}

\n
and quickly deduce (\ref{3.34}). We first introduce for general $h \in \IR$, $n \ge 0$, $1 \le i \le d$, 
\begin{equation*}
\mbox{$\cZ^h_n(i) = \{x \in T^+_{n+1}$; $x$ is a descendant of $(x_0,i)$ and $\varphi_y \ge h$, for all $y \in [(x_0,i),x]\}$. }
\end{equation*}

\n
The Markov property (\ref{3.6}) implies that $\IP^G$-a.s.
\begin{equation}\label{3.44}
\IP^G\big[\mbox{\f \small $\bigcap\limits_{i=1}^d$} \{| \cZ^h_n(i)| = 0, \;\mbox{for large} \; n\} |\varphi_{x_0}, \varphi_{(x_0,1)},\dots,\varphi_{(x_0,d)}\big] = \mbox{\f $\prod\limits^d_{i=1}$} \;q_h (\varphi_{(x_0,i)}),
\end{equation}
and hence $\IP^G$-a.s.
\begin{equation*}
\begin{split}
q_h(\varphi_{x_0}) & = 1_{(-\infty,h)}(\varphi_{x_0}) + \IE^G \big[\varphi_{x_0} \ge h, \mbox{\small $\bigcap^d_{i=1}$} \; \{| \cZ^h_n(i)| = 0, \; \mbox{for large} \; n\}| \varphi_{x_0}\big]
\\[-0.5ex]
&\!\!\! \stackrel{(\ref{3.44})}{=} 1_{(-\infty,h)}(\varphi_{x_0}) + \IE^G \big[\varphi_{x_0} \ge h, \; \mbox{\small $\prod\limits^d_{i=1}$} \; q_h (\varphi_{(x_0,i)}) | \varphi_{x_0}\big]
\\[-0.5ex]
&\!\!\! \stackrel{(\ref{3.13})}{=} 1_{(-\infty,h)}(\varphi_{x_0}) + Q_{t_1} (q_h)^d (\varphi_{x_0}) \, 1_{[h,\infty)}   (\varphi_{x_0}), \;\mbox{so that}
\end{split}
\end{equation*}
\begin{equation}\label{3.45}
q_h = 1_{(-\infty,h)} + 1_{[h,\infty)} \big(Q_{t_1} (q_h)\big)^d, \;\mbox{$\nu$-a.s.}\,.
\end{equation}

\medskip\n
Thus, setting $r_h = 1 - q_h$, we find that $\nu$-a.s.,
\begin{equation}\label{3.46}
\begin{split}
r_h & = 0 \; \mbox{on $(-\infty,h)$}
\\[-1ex]
& = 1 - Q_{t_1} (q_h)^d = Q_{t_1} (r_h) \;\dsl^{d-1}_{k=0} \, Q_{t_1} (q_h)^k \le d Q_{t_1}(r_h) = L_h (r_h) \; \mbox{on $[h,\infty)$}.
\end{split}
\end{equation}
This shows that $\nu$-a.s.,
\begin{equation}\label{3.47}
0 \le r_h \le L_h \, r_h,
\end{equation}

\n
and if $r_h$ is not a.s. $0$, then by (\ref{3.17}), $\lambda_h \ge 1$. But $h > h_*$ implies $\lambda_h < 1$, whence (\ref{3.43}). As a result, $\IP^G$-a.s., the cluster of $x_0$ in $T^+ \cap \{\varphi \ge h\}$ is finite, and (\ref{3.34}) easily follows.
\end{proof}

\begin{remark}\label{rem3.4} \rm
In proving (\ref{3.34}) one might try to use iii) of Theorem 2.1. of \cite{BiggKypr04} to argue that $\IP^G$-a.s., $M_\infty = 0$, when $h > h_*$, but our lack of understanding of the asymptotic behavior of $\chi_h$ (see Remark \ref{rem3.2}) does not seem to then lead to a quick conclusion that a.s. $|\cZ^h_n| = 0$ for large $n$. For this reason we used the above argument. \hfill $\square$
\end{remark}

\section{Bounds on $\lambda_h$ and $h_*$}
\setcounter{equation}{0}

In this section we derive upper and lower bounds on the critical value $h_*$ for the level-set percolation of the Gaussian free field $\varphi$ on the $(d+1)$-regular tree $T$, with $d \ge 2$. These results appear in Corollary \ref{cor4.5} and are direct consequences of the lower and upper bounds on $\lambda_h$ from Theorem \ref{theo4.3}. The upper bound comes as an application of the coupling between the Gaussian free field and random interlacements from Corollary \ref{cor2.4} in \hbox{Section 2.}

\medskip
We keep the notation of the previous sections. We let $\wt{T}$ stand for the metric graph attached to the $(d+1)$-regular tree endowed with unit weights. As in Sections 1 and 2, $(\wtv_z)_{z \in \wt{T}}$ stands for the canonical Gaussian free field on $\wt{T}$ and is governed by the probability $\wt{\IP}^G$. In the present context assumption (\ref{1.33}) is automatic, see (\ref{3.3}), and we will now prove that (\ref{1.31}) holds as well.

\begin{proposition}\label{prop4.1}
\begin{equation}\label{4.1}
\mbox{$\wt{\IP}^G$-a.s., $\{\wtv > 0\}$ only has bounded components on $\wt{T}$}.
\end{equation}
\end{proposition}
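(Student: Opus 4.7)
The plan is a branching-process / first-moment argument that exploits the conditional independence of the Brownian bridges on the cables of $\wt T$.

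First I would reduce: since each cable has length $(2 c_{x,y})^{-1} = 1/2$, any unbounded connected subset of $\wt T$ must contain infinitely many vertices of $T$; hence it suffices to show that for a fixed $x_0 \in T$, the connected component $\cC(x_0)$ of $x_0$ in $\{\wtv > 0\}$ a.s.\ meets only finitely many vertices, and by tree symmetry one can restrict attention to $\cC(x_0) \cap \wt T^+_{x_0, x_{-1}}$ for a fixed neighbor $x_{-1}$ of $x_0$ (see (\ref{3.8})). The key analytic input is the Markov property (\ref{1.8}) applied with $K = T$: conditionally on the discrete field $(\wtv_x)_{x \in T}$, the restriction of $\wtv$ to each open cable $I_e$, $e = \{x,y\}$, is an independent Brownian bridge from $\wtv_x$ to $\wtv_y$ of duration $1/2$, so by the standard bridge-positivity formula $\wtv$ is strictly positive on all of $I_e$ with conditional probability $(1 - e^{-4 \wtv_x \wtv_y})\, 1_{\wtv_x > 0,\, \wtv_y > 0}$.

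Combining this with (\ref{3.7}) (which gives that the $d$ children of $x_0$ in $T^+_{x_0, x_{-1}}$ have field values conditionally i.i.d.\ $N(a/d, 1/d)$ given $\wtv_{x_0} = a$) and the conditional independence of the $d$ cable events, the function $p(a) := \wt{\IP}^G\big[\cC(x_0) \cap \wt T^+_{x_0, x_{-1}} \text{ unbounded} \,\big|\, \wtv_{x_0} = a\big]$ (independent of $x_0$ by tree symmetry) satisfies the branching recursion
\[
1 - p(a) \;=\; \big(1 - \tfrac{1}{d}\, \cK p(a)\big)^{d} \qquad \text{for } a > 0, \qquad p \equiv 0 \text{ on } (-\infty, 0],
\]
where, for $a > 0$,
\[
\cK f(a) \;:=\; d\; \wtie^G\!\big[(1 - e^{-4 a \wtv_{x_1}})\, f(\wtv_{x_1})\, 1_{\wtv_{x_1} > 0} \,\big|\, \wtv_{x_0} = a\big],
\]
with $x_1 \sim x_0$ (and $\cK f(a) := 0$ for $a \le 0$). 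The elementary inequality $1 - (1-x)^d \le d\,x$ then yields the pointwise bound $0 \le p \le \cK p$. Note that $\cK$ is obtained from $L_0$ in (\ref{3.15}) by insertion of the extra factor $(1 - e^{-4ab})$ in the kernel; in particular $0 \le \cK \le L_0$, so $\cK$ is bounded and compact on $L^2(\nu)$, and it is self-adjoint there by the symmetry of the joint density of $(\wtv_{x_0}, \wtv_{x_1})$.

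The main obstacle is to establish that the spectral radius of $\cK$ on $L^2(\nu)$ is strictly less than $1$. Granted this, iterating $p \le \cK p$ gives $\|p\|_{L^2(\nu)} \le \|\cK\|^n \|p\|_{L^2(\nu)} \to 0$, so $p \equiv 0$ $\nu$-a.e.; by continuity of $p$ on $(0, \infty)$ (which follows from the recursion together with dominated convergence), $p \equiv 0$ on $(0, \infty)$, and (\ref{4.1}) then follows by a union bound over the $d + 1$ directions at $x_0$. This spectral bound is not a triviality because the discrete operator $L_0$ itself has top eigenvalue $\lambda_0 > 1$ for $d \ge 2$ (as seen already by testing (\ref{3.17}) with $f = 1_{[0, \infty)}$); one must therefore quantify the strictly positive defect $L_0 - \cK$, whose kernel is $d\, e^{-4ab}\, \sqrt{d/(2\pi)}\, e^{-d(b - a/d)^2/2}\, 1_{a, b > 0}$. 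This should be within reach via the explicit Gaussian identity $\int_0^\infty e^{-4 a b}\, \sqrt{d/(2\pi)}\, e^{-d(b - a/d)^2/2}\, db = e^{4 a^2/d}\, \overline\Phi\big(3 a/\sqrt{d}\big)$, combined with the hypercontractivity bound (\ref{3.14}) for the Ornstein--Uhlenbeck semigroup $Q_{t_1}$, to estimate $\sup_{\|f\|_{L^2(\nu)} = 1} \langle f, \cK f \rangle_\nu$ and show it is $< 1$.
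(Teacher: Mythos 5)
Your reduction and the recursion itself are fine (the children's values are indeed conditionally i.i.d.\ $N(a/d,1/d)$ given $\wtv_{x_0}=a$, and the bridge-positivity factor $1-e^{-4ab}$ is correct for cables of length $1/2$), but the pivotal step you defer --- that $\|\cK\|_{L^2(\nu)\to L^2(\nu)}<1$ --- is not just unproved, it is false, and the strategy collapses exactly there. Test $\cK^n$ against indicators: if $x_0,x_1,\dots,x_n$ is a geodesic ray in $T^+$, then iterating (\ref{3.7}) along the ray and using the conditional independence of the bridges given the discrete values gives the exact identity
\begin{equation*}
\big\langle 1_{(0,\infty)},(\tfrac1d\,\cK)^n 1\big\rangle_\nu\;=\;\wt{\IP}^G\big[\wtv>0\ \mbox{on}\ [\wt{x_0,x_n}]\,\big]\;=\;\frac{1}{\pi}\,\arcsin\big(d^{-n}\big),
\end{equation*}
the last equality by the sign-correlation formula $\wt{\IP}^G[\wtv\ \mbox{does not vanish on}\ [\wt{x_0,x_n}]]=\frac{2}{\pi}\arcsin\big(g(x_0,x_n)/\sigma^2\big)$ (this is (\ref{4.4}), with $g(x_0,x_n)/\sigma^2=d^{-n}$) together with the $\pm$ symmetry of $\wtv$. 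Hence $\big\langle 1_{(0,\infty)},\cK^n 1\big\rangle_\nu=\frac{d^n}{\pi}\arcsin(d^{-n})\to\frac1\pi>0$, whereas for a self-adjoint $\cK$ with $\|\cK\|<1$ this quantity would be bounded by $\|\cK\|^n\,\|1_{(0,\infty)}\|_{L^2(\nu)}$ and tend to $0$. So $\|\cK\|_{L^2(\nu)}\ge 1$: the branching structure you set up is (at best) exactly critical, not subcritical, and no hypercontractivity computation can produce $\sup_{\|f\|_{L^2(\nu)}=1}\langle f,\cK f\rangle_\nu<1$. Iterating the linearized inequality $p\le\cK p$ therefore gives nothing.

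This criticality is precisely what the paper's proof is designed to handle: the same arcsine identity shows that the first moment $\wtie^G[|\wt{\cZ}_n|]\le\frac{2}{\pi}\,d^n\arcsin(d^{-n})$ stays \emph{bounded} rather than decaying geometrically, and the conclusion is then extracted not from a norm contraction but from the non-negative martingale $\wt{M}_n=\sum_{x\in\wt{\cZ}_n}\varphi_x$ of Lemma \ref{lem4.2} (whose proof uses the strong Markov property (\ref{1.18}) for a compatible compact set containing the explored region), combined with a conditional Borel--Cantelli argument: on the event that $\sum_{x\in\wt{\cZ}_n}(\varphi_x+1)\le M$ along a subsequence, extinction in one generation has probability bounded below, and martingale convergence plus Fatou show that these events exhaust the space. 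If you wanted to salvage your fixed-point route you would have to work with the exact equation $1-p=(1-\frac1d\cK p)^d$, i.e.\ exploit the second-order (concavity) defect at criticality to exclude nonzero solutions, which is a genuinely harder analytic statement than a spectral bound. Two smaller points: the Markov property (\ref{1.8}) is only stated for compact $K$ with finitely many components, so conditioning on the whole discrete field $(\wtv_x)_{x\in T}$ needs an approximation argument; and $\cK\le L_0$ entrywise gives $\|\cK\|\le\lambda_0$, which for $d\ge2$ is $>1$ by (\ref{4.15}), so domination by $L_0$ points in the wrong direction from the start.
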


\begin{proof}
Given $x,y \in T$, we write $[\wt{x,y}] \subseteq \wt{T}$ for the geodesic segment in $\wt{T}$ between $x$ and $y$. For convenience we write $\varphi_x$ in place of $\wtv_x$ when $x \in T\,(\subseteq \wt{T})$ in the proof of Proposition \ref{prop4.1}. With $T^+$ and $T^+_n$ as in (\ref{3.8}), (\ref{3.9}), we introduce for $n \ge 0$,
\begin{equation}\label{4.2}
\wt{\cZ}_n = \{x \in T^+_n; \;\wtv > 0 \;\mbox{on} \; [\wt{x_0,x}]\}.
\end{equation}
We will show that (with similar notation as below (\ref{3.42}))
\begin{equation}\label{4.3}
\mbox{$\wt{\IP}^G$-a.s., \; $| \wt{\cZ}_n | = 0$ for large $n$,}
\end{equation}

\n
and (\ref{4.1}) will quickly follow. By Lemma 3.1 of \cite{Lupu} and Lemma 10.12, p.~145 of \cite{Jans97} or Proposition 5.2 of \cite{Lupu}, one knows that for any $x \in T^+_n$,
\begin{equation}\label{4.4}
\begin{array}{l}
\mbox{$\wt{\IP}^G\big[\wtv$ does not vanish on $[\wt{x_0,x}] \big] = \wtie^G[{\rm sign}(\varphi_{x_0}) \, {\rm sign}(\varphi_x)] =$}
\\[1ex]
\mbox{\f $\dis\frac{2}{\pi}$} \;\arcsin \Big(\mbox{\f $\dis\frac{g(x_0,x)}{\sigma^2}$}\Big) \stackrel{(\ref{3.1}) - (\ref{3.3})}{=} \mbox{\f $\dis\frac{2}{\pi}$} \arcsin \Big( \mbox{\f $\dis\frac{1}{d^n}\Big)$} .
\end{array}
\end{equation}
In particular, we see that
\begin{equation}\label{4.5}
\mbox{$\wt{\IE}^G [| \wt{\cZ}_n |] \le \mbox{\f $\dis\frac{2}{\pi}$} \;d^n \arcsin  \Big( \mbox{\f $\dis\frac{1}{d^n}$}\Big)$, for $n \ge 0$}.
\end{equation}
We will use the following

\begin{lemma}\label{lem4.2}
\begin{equation}\label{4.6}
\begin{split}
\wt{M}_n & = \mbox{$\dsl_{x \in \wt{\cZ}_n} \varphi_x$, $n \ge 0$, is a non-negative $(\wt{\cF}_n)$-martingale, where}
\\
\wt{\cF}_n & = \sigma \big(\wtv_z, \,z \in \mbox{\f \small $\bigcup\limits_{x \in T^+_n}$} \;(\wt{[x_0,x]})\big).
\end{split}
\end{equation}
\end{lemma}

\begin{proof}
Note that $\wt{M}_n$ is clearly non-negative, $\wt{\cF}_n$-adapted, integrable. Moreover, for $n \ge 0$,
\begin{equation}\label{4.7}
\wtie^G [\wt{M}_{n+1} \,| \,\wt{\cF}_n] = \dsl_{x \in \wt{\cZ}_n} \;\dsl^d_{j=1} \;\wtie^G\big[\varphi_{(x,j)}, \wtv > 0 \; \mbox{on} \; [\wt{x,(x,j)}]\, |\,  \wt{\cF}_n\big].
\end{equation}

\n
We will now compute the conditional expectation in the right-hand side. Consider fixed $x \in T^+_n$, a fixed $j \in \{1,\dots,d\}$, and define with hopefuly obvious notation
\begin{align}
\cH = \big(\mbox{\small $\bigcup\limits_{y \in T^+_n}$} [\wt{x_0,y}]\big)\; \cup &\; \mbox{$\big($the closure of the connected component of} \label{4.8}
\\[-2ex]
&\;\; \{\wtv > 0\} \cap \big[x,(x,j)\big)\big). \nonumber
\end{align}

\medskip\n
Then, $\cH$ is a compatible random compact subset of $\wt{T}$, see above (\ref{1.10}) (actually, as a minor point, when $n = 0$, we replace $x_0$ by $x_{-1}$ in (\ref{4.8}) to ensure that $\cH$ is the closure of its interior). Since $\cH$ contains $\bigcup_{y \in T^+_n} [\wt{x_0,y}]$, it follows from (\ref{1.12}) iii) that $\wt{\cF}_n \subseteq \cA^+_\cH$. By the strong Markov property (\ref{1.18}), we have
\begin{equation}\label{4.9}
\wtie^G [\varphi_{(x,j)} \, | \, \cA^+_\cH] = \wte_{(x,j)} [\wtv(X_{H_{\cH}}), H_\cH < \infty] = 0 \;\mbox{on} \; \{(x,j) \notin \cH, x \in \wt{\cZ}_n\},
\end{equation}
since when $x \in \wt{\cZ}_n$ and $(x,j) \notin \cH$, $\wt{P}_{(x,j)}$-.a.s., $\wtv(X_{H_\cH}) = 0$ on $\{H_\cH < \infty\}$. So $\wt{\IP}^G$-a.s.,
\begin{equation}\label{4.10}
\begin{array}{l}
\wt{\IE}^G[ \varphi_{(x,j)} \,|\, \wt{\cF}_n ] 1 \{x \in \wt{\cZ}_n\} = \wt{\IE}^G[\wt{\IE}^G[ \varphi_{(x,j)}\, \, | \, \cA^+_\cH] \,| \, \wt{\cF}_n] \; 1\{x \in \wt{\cZ}_n\} \stackrel{(\ref{4.9}),(\ref{1.12})\,{\rm ii)}}{=}
\\[1ex]
\wt{\IE}^G[ \varphi_{(x,j)} \,1 \{(x,j) \in \cH\} \, | \,  \wt{\cF}_n] \,1\{x \in  \wt{\cZ}_n\} = 
\\[1ex]
\wtie^G[ \varphi_{(x,j)}, \wtv > 0 \;\; \mbox{on}  \;  [\wt{x,(x,j)}] \, | \,  \wt{\cF}_n]  \, 1\{x \in \wt{\cZ}_n\}.
\end{array}
\end{equation}

\n
The last expression is precisely the summand in the right-hand side of (\ref{4.7}). On the other hand, by the Markov property for $\wtv$, see (\ref{1.8}), we have
\begin{equation}\label{4.11}
\wtie^G [ \varphi_{(x,j)}  \, | \,  \wt{\cF}_n] = \mbox{\f $\dis\frac{1}{d}$} \; \varphi_x.
\end{equation}

\n
Inserting this equality in the left-hand side of (\ref{4.10}) and coming back to (\ref{4.7}), yields that $\wtie^G [\wt{M}_{n+1}  \, | \,  \wt{\cF}_n]  = \sum_{x \in \wt{\cZ}_n} d\times \frac{1}{d} \, \varphi_x = \wt{M}_n$. This proves (\ref{4.6}).
\end{proof}

\medskip
By the martingale convergence theorem, we find that
\begin{equation}\label{4.12}
\mbox{$\wt{\IP}^G$-a.s., $\wt{M}_n \longrightarrow \wt{M}_\infty \ge 0$,  where $\wtie^G [\wt{M}_\infty ] < \infty$}.
\end{equation}
As we now explain
\begin{equation}\label{4.13}
\mbox{$\wt{\IP}^G$-a.s., $| \wt{\cZ}_n | = 0$ for large $n$}.
\end{equation}

\n
Indeed, set for $M \ge 1$ and $n \ge 1$, $A_{M,n} = \{\sum_{x \in \wt{\cZ}_n} (\varphi_x + 1) \le M\}$ as well as $A_M = \limsup_n A_{M,n}$. By the Markov property of $\wtv$, see (\ref{1.8}), on $A_{M,n}$, we have
\begin{equation*}
\begin{split}
\wt{\IP}^G [ |\, \wt{\cZ}_{n+1} | = 0 \,| \, \wt{\cF}_n]  & \ge \wt{\IP}^G [\varphi_{(x,j)} < 0, \;\mbox{for all} \; x \in \wt{\cZ}_n, j = 1,\dots, d  \,| \, \wt{\cF}_n] 
\\
& =\mbox{\small $ \prod\limits_{x \in \wt{\cZ}_n}$} Q_{t_1} (1_{(-\infty,0)})^d (\varphi_x)
\\
&\ge Q_{t_1} (1_{(-\infty,0)})^{dM} (M) \stackrel{\rm def}{=} c(d,M), \;\mbox{by definition of $A_{M,n}$}.
\end{split}
\end{equation*}

\medskip\n
It now follows from Borell-Cantelli's lemma that
\begin{equation}\label{4.14}
\mbox{$\wt{\IP}^G$-a.s., on $A_M,  |\, \wt{\cZ}_n | = 0$ for large $n$}.
\end{equation}

\medskip\n
By (\ref{4.5}) and Fatou's lemma $\wtie^G[\liminf\limits_n \,|\wt{\cZ}_n|] \le \frac{2}{\pi}$. Combined with (\ref{4.12}), we see that $\wt{\IP}^G[\bigcup_{M \ge 1} A_M] = 1$ and (\ref{4.13}) follows. Hence, $\wt{\IP}^G$-a.s., the connected component of $x_0$ in $\{\wtv > 0\} \cap (\bigcup_{x \in T^+} [\wt{x_0,x}])$ is bounded and (\ref{4.1}) readily follows.
\end{proof}

\medskip
We now come to the main estimates on the quantity $\lambda_h$ from (\ref{3.17}). We recall the notation $\overline{\Phi}\,(a) = \frac{1}{\sqrt{2\pi}} \int^{+\infty}_a\, e^{-\frac{s^2}{2}} ds$ from (\ref{0.13}). The upper bound will crucially rely on Corollary \ref{cor2.4}.

\begin{theorem}\label{theo4.3} $(d \ge 2)$
\begin{align}
\mbox{For all $h \in \IR$}, & \;\lambda_h > d \overline{\Phi} \,\big(h \mbox{\f $\dis\frac{(d-1)}{\sqrt{d}}$} \big) \; \mbox{$\big($in particular $\lambda_0 >\frac{d}{2}\big)$}.  \label{4.15}
\\[1ex]
\mbox{For all $h \ge 0$}, & \;\lambda_h \le \lambda_0  \;e^{-\frac{h^2(d-1)^2}{2d}} \;   \big(< d \,e^{-\frac{h^2(d-1)^2}{2d}} \big).  \label{4.16}
\end{align}

\n
(We recall that $\frac{1}{\sqrt{2 \pi}} \; (a + \frac{1}{a})^{-1} \, e^{-\frac{a^2}{2}} \le \overline{\Phi}(a) \le \frac{1}{\sqrt{2 \pi}a} \; e^{-\frac{a^2}{2}}$, for $a > 0$).
\end{theorem}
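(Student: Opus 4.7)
The two bounds are of quite different nature: (\ref{4.15}) is a test-function computation in the variational principle (\ref{3.17}), while (\ref{4.16}) is where Corollary \ref{cor2.4} enters crucially.

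For the lower bound (\ref{4.15}), my plan is to test with $g = 1_{[h,\infty)}$. Using the Ornstein-Uhlenbeck representation (\ref{3.10}), together with $e^{-t_1}=1/d$ and $(1-e^{-2t_1})\sigma^2=1/d$, one computes
\[
(Q_{t_1}\,1_{[h,\infty)})(a) \;=\; \overline{\Phi}\bigl(\sqrt{d}\,h - a/\sqrt{d}\bigr),
\]
which for $a\ge h$ is at least $\overline{\Phi}(h(d-1)/\sqrt{d})$, and strictly so for $a>h$. Since $L_h = \pi_h(dQ_{t_1})\pi_h$, integrating against $\nu$ on $[h,\infty)$ and dividing by $\|g\|_{L^2(\nu)}^2 = \nu([h,\infty))$ produces $\lambda_h \ge d\,\overline{\Phi}(h(d-1)/\sqrt{d})$; the inequality is strict because $1_{[h,\infty)}$ is not a scalar multiple of the eigenfunction $\chi_h$ (continuous and strictly positive on $[h,\infty)$ by Proposition \ref{prop3.1}).

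For (\ref{4.16}), assume $h>0$ (the case $h=0$ is trivial) and set $u=h^2/2$, so that $\sqrt{2u}=h$. Applying Corollary \ref{cor2.4} with $A=(0,\infty)$ realises on a common space a GFF $\varphi$, an independent vacant set $\cV^u$, and a second GFF $\eta$ with $\{\eta>h\}\subseteq\{\varphi>0\}\cap\cV^u$ and with $\eta_x=\varphi_x+h$ on this event (by (\ref{2.48})). Applying the martingale identity underlying Proposition \ref{prop3.3} to $\eta$ at level $h$ gives $\IE\big[\sum_{x\in\cZ_n^{\eta,h}}\chi_h(\eta_x)\big]=\lambda_h^n\langle\chi_h\rangle_\nu$, and the containment together with $\eta_x=\varphi_x+h$ bounds this from above by
\[
\IE\Bigl[\dsl_{x\in\cY_n}\chi_h(\varphi_x+h)\Bigr], \qquad \cY_n := \{x\in T^+_n:\;\varphi_y\ge 0\;\text{and}\;y\in\cV^u\;\forall\,y\in[x_0,x]\}.
\]

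By the independence of $\varphi$ and $\cV^u$ the right-hand side factorises. The interlacement piece $\IP[x_0,\ldots,x_n\in\cV^u]=e^{-u\,\mathrm{cap}(\{x_0,\ldots,x_n\})}$ telescopes on the $(d+1)$-regular tree: the Markov property of interlacements along a geodesic, combined with the Green-function values (\ref{3.1})--(\ref{3.3}), yields $\mathrm{cap}(\{x_0,\ldots,x_n\}) = (d^2-1)/d + n(d-1)^2/d$, hence an overall factor $e^{-u(d^2-1)/d}e^{-nu(d-1)^2/d}$. The GFF piece, summed over the $d^n$ vertices of $T^+_n$, identifies (by iterating (\ref{3.13}) and absorbing the factors $d$ into $L_0 = \pi_0(dQ_{t_1})\pi_0$) with $\langle 1,L_0^n\psi\rangle_\nu$, where $\psi(a):=\chi_h(a+h)$ is supported on $[0,\infty)$ and lies in $L^2(\nu)$ by (\ref{3.19}) and H\"older. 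Self-adjointness of $L_0$ and Cauchy-Schwarz then give $\langle 1,L_0^n\psi\rangle_\nu \le \lambda_0^n\|\psi\|_{L^2(\nu)}$. Combining,
\[
\lambda_h^n\langle\chi_h\rangle_\nu \;\le\; e^{-u(d^2-1)/d}\bigl(\lambda_0\, e^{-u(d-1)^2/d}\bigr)^n\|\psi\|_{L^2(\nu)};
\]
taking $n$-th roots and letting $n\to\infty$ yields $\lambda_h \le \lambda_0\,e^{-u(d-1)^2/d} = \lambda_0\,e^{-h^2(d-1)^2/(2d)}$, and the strict bound $\lambda_0<d$ from Proposition \ref{prop3.1} supplies the parenthetical part. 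The main obstacle I foresee is checking cleanly the telescoping path-capacity formula on the tree and confirming that the conditional-independence factorisation survives the interplay with the martingale identity; once that is in place, everything else reduces to a routine $L^2(\nu)$ operator-norm estimate.
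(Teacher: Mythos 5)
Your proof of (\ref{4.15}) is essentially the paper's: both test the quadratic form (\ref{3.17}) with $1_{[h,\infty)}$ and use $Q_{t_1}1_{[h,\infty)}(a)=\overline{\Phi}\big(\sqrt{d}\,h-a/\sqrt{d}\big)>\overline{\Phi}\big(h\frac{(d-1)}{\sqrt d}\big)$ for $a>h$. Note that strictness already follows by integrating this strict pointwise inequality against $\nu$ (which has no atom at $h$); your alternative justification via ``$1_{[h,\infty)}$ is not a multiple of $\chi_h$'' is not needed and, as stated, would itself require a one-line argument (e.g.\ that $L_h1_{[h,\infty)}$ is non-constant on $[h,\infty)$, so $1_{[h,\infty)}$ cannot be an eigenfunction).

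For (\ref{4.16}) your route is correct and genuinely different from the paper's. The paper never works with $\chi_h$ itself: it introduces the two-sided truncations $L_{h,h'}=\pi_{h,h'}L\pi_{h,h'}$, whose eigenfunctions $\chi_{h,h'}$ are bounded and bounded away from $0$ on $[h,h']$, compares $\IE^G[|\cZ_n^{h,h'}|]$ with $\IE^G[|\cZ_n^{0,h'-h}|]$ via (\ref{2.47}) and the path-capacity formula of \cite{Teix09b}, and then takes $n\to\infty$ followed by $h'\to\infty$ (using $\lambda_{h,h'}\uparrow\lambda_h$); this detour exists precisely because no $\|\chi_h\|_\infty$ or lower bound on $\chi_h$ is available (Remark \ref{rem4.4}). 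You instead keep $\chi_h$, use the pointwise form of the coupling, $\eta_x=\varphi_x+\sqrt{2u}$ on $\{\eta_x>\sqrt{2u}\}$ (this is (\ref{2.48}), equivalently (\ref{2.47}) applied to singletons $A=\{a\}$), transport the eigenfunction weight through the shift $\psi(\cdot)=\chi_h(\cdot+h)$, and close with $\langle 1,L_0^n\psi\rangle_\nu\le\lambda_0^n\|\psi\|_{L^2(\nu)}$, factorizing by the independence of $\varphi$ and $\cV^u$. The only extra input is $\psi\in L^2(\nu)$, which the hypercontractive bound (\ref{3.19}) does provide (the Gaussian density absorbs the factor $e^{bh/\sigma^2}$ by H\"older); so you trade the paper's truncation and double limit for an $L^q$-integrability argument, which is arguably leaner and uses only one application of the coupling at a fixed level. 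Two small points should be written out: the capacity of a geodesic segment of length $n$ (endpoints contribute $d-1$, interior vertices $(d-1)^2/d$, giving $(d^2-1)/d+n(d-1)^2/d$, consistent with (\ref{4.27}) and \cite{Teix09b}, which you may simply cite as the paper does); and the replacement of $\{\eta_y\ge h\}$ by $\{\eta_y>h\}$ along the path before invoking the coupling, justified because the marginals of $\eta$ are atomless. Neither is a gap.
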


\begin{proof}
We begin by the proof of (\ref{4.15}). For $h \in \IR$ and $a > h$, one has, cf.~(\ref{3.15}),
\begin{equation}\label{4.17}
\begin{split}
L_h \, 1_{[h,+ \infty)} (a) & = d \,P^\xi \big[ \mbox{\f $\dis\frac{a}{d}$} + \xi \ge h\big] \;\;\mbox{with $\xi$ a $N\big(0,  \mbox{\f $\dis\frac{1}{d}$}\big)$-distributed variable}
\\[1ex]
& \stackrel{a > h}{>} d \,P^\xi \big[\xi > h  \mbox{\f $\dis\frac{(d-1)}{d}$}\big] = d\,\overline{\Phi} \;\big( h  \mbox{\f $\dis\frac{(d-1)}{\sqrt{d}}$}\big). 
\end{split}
\end{equation}
As a result, we find that $\nu$-a.s.~on $[h,\infty)$,
\begin{equation}\label{4.18}
L_h \, 1_{[h,\infty)}  > d \, \overline{\Phi} \;\big( h \mbox{\f $\dis\frac{(d-1)}{\sqrt{d}}$}\big) ,
\end{equation}

\n
and (\ref{4.15}) readily follows from the variational formula (\ref{3.17}). 

\medskip
We now turn to the proof of (\ref{4.16}). For $h < h'$ we define $\pi_{h,h'}$ as the multiplication operator by $1_{[h,h']}$ in $L^2(\nu)$, and set
\begin{equation}\label{4.19}
L_{h,h'} = \pi_{h,h'} \, L \pi_{h,h'}, \;\mbox{with $L$ as in (\ref{3.15})}.
\end{equation}

\n
Then, $L_{h,h'}$ is a self-adjoint, non-negative, Hilbert-Schmidt operator on $L^2(\nu)$, and we introduce the maximum eigenvalue
\begin{equation}\label{4.20}
\lambda_{h,h'} = \| L_{h,h'}\|_{L^2(\nu) \r L^2(\nu)} \le \lambda_h \;\; \mbox{(see (\ref{3.17}))}.
\end{equation}

\n
The same proof as in Proposition \ref{prop3.1} shows that $\lambda_{h,h'}$ is a simple eigenvalue of $L_{h,h'}$ and we have a uniquely defined function
\begin{equation}\label{4.21}
\begin{array}{l}
\mbox{$\chi_{h,h'}$ non-negative, vanishing outside $[h,h']$, continuous and strictly positive}
\\
\mbox{on $[h,h']$, with unit $L^2(\nu)$-norm, eigenfunction of $L_{h,h'}$ for the eigenvalue $\lambda_{h,h'}$}.
\end{array}
\end{equation}

\n
It is also immediate that $\lambda_{h,h'}$ is non-decreasing in $h'(> h)$ and since $\lambda_h \ge \lambda_{h,h'} \ge \big\langle L_{h,h'} \, \chi_h, \, \chi_h \big\rangle \longrightarrow \lambda_h$, as $h' \r \infty$, one has
\begin{equation}\label{4.22}
\lim\limits_{h' \r \infty} \,\lambda_{h,h'} = \lambda_h .
\end{equation}
In analogy with (\ref{3.29}) we introduce
\begin{equation}\label{4.23}
\cZ_n^{h,h'} = \{x \in T^+_n; \, \varphi_y \in [h,h'] \; \mbox{for all $y \in [x_0,x]\}$},
\end{equation}

\n
and note that for all $x \in T^+_n$, by iterated application of (\ref{3.13})
\begin{equation}\label{4.24}
\begin{split}
\IE^G [ \chi_{h,h'} (\varphi_{x_0}), \, x \in\cZ_n^{h,h'}] & = \big\langle \chi_{h,h'}, \big(\mbox{\f $\dis\frac{1}{d}$} \; L_{h,h'}\big)^n 1 \big\rangle_\nu
\\ 
&\!\!\! \stackrel{(\ref{4.21})}{=} (\lambda_{h,h'} / d)^n \, \big\langle \chi_{h,h'}\big\rangle_\nu ,
\end{split}
\end{equation}

\n
where $\langle \cdot \rangle_\nu$ stands for $\nu$-expectation (as below (\ref{3.35})). In particular, summing over $x$ in $T^+_n$, one obtains
\begin{equation}\label{4.25}
\lambda^n_{h,h'} \, \big\langle \chi_{h,h'}\big\rangle_\nu = \IE^G[\chi_{h,h'} (\varphi_{x_0}) \, | \cZ_n^{h,h'} | ] , \; \mbox{for $n \ge 0$, $h < h'$}.
\end{equation}

\n
We now assume $0 \le h \le h'$, and note that by (\ref{2.47}) of Corollary \ref{cor2.4}, one has for $x \in T^+_n$
\begin{equation}\label{4.26}
\IP^G [x \in \cZ_n^{h,h'}] \le \IP^G [ x \in \cZ_n^{0,h'-h}] \,\IP^I\big[[x_0,x] \subseteq \cV^u\big], \; \mbox{with $h = \sqrt{2u}$}.
\end{equation}

\n
By (5.9) - (5.9) of \cite{Teix09b} (note that $d$ in \cite{Teix09b} corresponds to $d+1$ here), one knows that
\begin{equation}\label{4.27}
\begin{split}
\IP^I \big[[x_0,x] \subseteq \cV^u\big] & = \exp\big\{- u \,({\rm cap}\{x_0\}) - n \,u \mbox{\f $\dis\frac{(d-1)^2}{d}$}\big\}
\\ 
&\!\! \stackrel{(\ref{3.3})}{=} \exp\big\{ -  \mbox{\f $\dis\frac{u}{\sigma^2}$} - n \,u  \mbox{\f $\dis\frac{(d-1)^2}{d}$}\big) = \exp\big\{ - u \big(d - \mbox{\f $\dis\frac{1}{d}$}\big)  - n \, u  \mbox{\f $\dis\frac{(d-1)^2}{d}$}\big\}.
\end{split}
\end{equation}

\n
Summing over $x \in T^+_n$ in (\ref{4.26}) we thus find that for $n \ge 0$, $0 \le h < h'$,
\begin{equation}\label{4.28}
\IE^G [| \cZ_n^{h,h'}|] \le \IE^G [ | \cZ_n^{0,h'-h} | ] \; \exp \big\{ - u \big(d - \mbox{\f $\dis\frac{1}{d}$}\big)  - n \, u  \mbox{\f $\dis\frac{(d-1)^2}{d}$}\big\}.
\end{equation}

\n
By (\ref{4.25}), the left-hand side of (\ref{4.28}) is at least
\begin{equation*}
\lambda^n_{h,h'} \, \big\langle\chi_{h, h'}\big\rangle_\nu / \|\chi_{h,h'}\|_\infty,
\end{equation*}

\n
and the expection in the right-hand side of (\ref{4.28}) is at most
\begin{equation*}
\lambda^n_{0, h'-h}\ \, \big\langle\chi_{0, h'-h}\big\rangle_\nu / \inf\limits_{[0,h'-h]} \chi_{0,h'-h}.
\end{equation*}

\n
Taking $n$-th roots and letting $n$ go to infinity, we find that
\begin{equation}\label{4.29}
\lambda_{h,h'} \le \lambda_{0,h'-h} \,e^{-\frac{u(d-1)^2}{d}}, \;\mbox{for $0 \le h < h'$}.
\end{equation}
Letting $h' \uparrow \infty$ yields (\ref{4.16}) (recall that $h = \sqrt{2u}$).
\end{proof}

\begin{remark}\label{rem4.4} \rm 
We do not know of a derivation of the upper bound (\ref{4.16}) directly from the variational formula (\ref{3.17}) (or (\ref{0.9})). Incidentally, the above proof makes use below (\ref{4.28}) of quantities such as $\| \chi_{h,h'}\|_\infty$ and $(\inf_{[0,h'-h]} \chi_{h,h'})^{-1}$. We have no bounds on such quantities when $\chi_h$ replaces $\chi_{h,h'}$, see Remark \ref{rem3.2}. The approximations $\lambda_{h,h'}$ of $\lambda_h$ in the above proof enable us to bypass this lack of controls. \hfill $\square$
\end{remark}

With the help of Proposition \ref{prop3.3}, the above Theorem \ref{theo4.3} yields bounds on $h_*$.

\begin{corollary}\label{cor4.5} $(d \ge 2)$
\begin{equation}\label{4.30}
0 \le h_\Delta < h_* \le h_{\square} < \sqrt{2u_*},
\end{equation}
where $h_\Delta, h_\square$ are defined by
\begin{equation}\label{4.31}
d \overline{\Phi}     \, \big(h_\Delta \,\mbox{\f $\dis\frac{(d-1)}{\sqrt{d}}$}\big) = 1, \; \lambda_0 \,e^{-\frac{h^2_\square(d-1)^2}{2d}} = 1,
\end{equation}

\n
and $u_*$ is the critical level for the percolation of $\cV^u$ on $T$ (by (5.5) of \cite{Teix09b}, $d \,e^{- u_* \,\frac{(d-1)^2}{d}} = 1$).
\end{corollary}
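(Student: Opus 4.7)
The plan is to convert the bounds on $\lambda_h$ from Theorem \ref{theo4.3} into bounds on $h_*$ by using the characterization of $h_*$ as the unique solution of $\lambda_{h_*}=1$ (Proposition \ref{prop3.3}), together with the strict monotonicity of $h \mapsto \lambda_h$ established in (\ref{3.20}) of Proposition \ref{prop3.1}. The definitions of $h_\Delta$ and $h_\square$ in (\ref{4.31}) are chosen precisely to make the lower bound (\ref{4.15}) and the upper bound (\ref{4.16}) equal to $1$ at these values, so the argument is essentially a one-line monotonicity comparison in each case, with only the final strict inequality $h_\square < \sqrt{2u_*}$ requiring a small additional observation.

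For the lower bound, I would evaluate (\ref{4.15}) at $h = h_\Delta$ to obtain $\lambda_{h_\Delta} > d\overline{\Phi}(h_\Delta (d-1)/\sqrt{d}) = 1 = \lambda_{h_*}$, where the middle equality is the definition of $h_\Delta$. Strict monotonicity of $\lambda_\point$ then yields $h_\Delta < h_*$. To check that $h_\Delta \ge 0$, note that $h \mapsto d\overline{\Phi}(h(d-1)/\sqrt{d})$ is strictly decreasing in $h$, takes the value $d/2 \ge 1$ at $h=0$ (since $d \ge 2$), and tends to $0$ at $+\infty$; so its unique root $h_\Delta$ lies in $[0,\infty)$. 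For the upper bound, I would evaluate (\ref{4.16}) at $h = h_\square$ to obtain $\lambda_{h_\square} \le \lambda_0\, e^{-h_\square^2(d-1)^2/(2d)} = 1 = \lambda_{h_*}$, and monotonicity then gives $h_* \le h_\square$.

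For the final strict inequality $h_\square < \sqrt{2u_*}$, I would compare the two defining relations. From (\ref{3.20}) the map $h \mapsto \lambda_h$ takes values in the open interval $(0,d)$, so $\lambda_0 < d$ strictly, and therefore $\log \lambda_0 < \log d$. The definition of $h_\square$ rewrites as $h_\square^2 (d-1)^2/(2d) = \log \lambda_0$, while the defining equation $d\,e^{-u_*(d-1)^2/d} = 1$ rewrites as $2u_*(d-1)^2/(2d) = \log d$. Dividing both sides by the common positive factor $(d-1)^2/(2d)$ yields $h_\square^2 < 2u_*$, whence $h_\square < \sqrt{2u_*}$. The main (very mild) obstacle here is really the strictness $\lambda_0 < d$; this is not entirely obvious from the variational formula (\ref{3.17}) alone but is already contained in (\ref{3.20}) of Proposition \ref{prop3.1}, so no new work is needed. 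Everything else is a direct substitution into the already-established inequalities of Theorem \ref{theo4.3}.
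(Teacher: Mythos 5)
Your argument is correct and is exactly the paper's route: the paper's proof is a one-line citation of (\ref{4.15}), (\ref{4.16}), $\lambda_{h_*}=1$ from Proposition \ref{prop3.3}, and (5.5) of \cite{Teix09b}, and your write-up simply supplies the straightforward monotonicity comparisons (via (\ref{3.20})) that make this "direct consequence" explicit, including the correct observation that $\lambda_0<d$ gives the strictness of $h_\square<\sqrt{2u_*}$.
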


\begin{proof}
This is a direct consequence of (\ref{4.15}), (\ref{4.16}) and $\lambda_{h_*} = 1$ from Proposition \ref{prop3.3}, as well as of (5.5) of \cite{Teix09b}.
\end{proof}

\begin{remark}\label{rem4.6} \rm 
We thus have $0 < h_* < \sqrt{2u_*}$ for all $d \ge 2$. In the case of level-set percolation of the Gaussian free field on $\IZ^d$, $d \ge 3$, $h_* \ge 0$, is known since \cite{BricLeboMaes87}, but $h_* > 0$ is presently only known for large $d$, see \cite{DrewRodr15}, \cite{RodrSzni13}, however expected for all $d \ge 3$. The inequality $h_* \le \sqrt{2u_*}$ was proved in \cite{Lupu}. The coupling of Section 2 that we used here, may perhaps be helpful in the case of $\IZ^d$, $d \ge 3$, to show that in fact $h_* < \sqrt{2u_*}$ holds as well. 

\hfill $\square$
\end{remark}


\end{document}